\title{A central limit theorem for continuous-time Markov processes conditioned not to be absorbed}
\author{William Oçafrain\footnote{Université de Lorraine, CNRS, Inria, IECL, F-54000, Nancy, France. Email : w.ocafrain@hotmail.fr}}
\def\Cov{\textbf{Cov}}
\def\d{\partial}
\newtheorem{assumption}{Assumption}
\newtheorem{theorem}{Theorem}
\newtheorem{lemma}{Lemma}
\newtheorem{remark}{Remark}
\newtheorem{corollary}{Corollary}
\def\Kolm{\text{Kolm}}
\def\L{\mathbb{L}}
\def\E{\mathbb{E}}
\def\P{\mathbb{P}}
\def\R{\mathbb{R}}
\def\Q{\mathbb{Q}}
\def\N{\mathbb{N}}
\def\1{\mathbbm{1}}
\def\d{\partial}
\def\Z{\mathbb{Z}}
\def\cE{{\mathcal E}}
\def\cF{{\mathcal F}}
\def\cD{\mathcal{D}}
\def\cM{\mathcal{M}}
\def\cB{\mathcal{B}}
\def\cN{\mathcal{N}}
\def\Kolm{\text{Kolm}}
\begin{document}

\maketitle

\abstract{This paper aims to establish a central limit theorem for Markov processes conditioned not to be absorbed under a very general assumption on quasi-stationarity for the underlying process. To do so, a central limit theorem has been established for ergodic Markov processes. The conditional central limit theorem is then obtained by applying the central limit theorem to the $Q$-process.     }

\textit{Key words: Quasi-stationary distribution; Quasi-stationarity; Quasi-ergodic distribution; Central limit theorem; $Q$-process.}



\section*{Notation}

\begin{itemize}
    \item $\cM_1(E)$: Set of the probability measures defined on $E$. 
    \item For any $\mu \in \cM_1(E)$ and measurable function $f$ such that $\int_E f(x) \mu(dx)$ is well-defined,
    $$\mu(f) := \int_E f(x) \mu(dx).$$
\item For a given positive function $\psi$, $\L^\infty(\psi)$ is the set of functions $f$ such that $f/\psi$ is bounded, endowed with the norm 
$$\|f\|_{\L^\infty(\psi)} := \|f/\psi\|_\infty.$$ 
    \item For any positive measurable function $\psi$, for any $\mu, \nu \in \cM_1(E)$,
    $$\|\mu - \nu \|_{\psi} := \sup_{\|f\|_{\L^\infty(\psi)} \leq 1} |\mu(f) - \nu(f)|.$$
    \item For any nonnegative measurable function $f$ and $\mu \in \cM_1(E)$ such that $\mu(f) \in (0,+\infty)$,
    $$f \circ \mu(dx) := \frac{f(x) \mu(dx)}{\mu(f)}.$$
\item Kolmogorov distance: For any $\mu, \nu \in \cM_1(\R)$,
$$d_\Kolm(\mu, \nu) := \sup_{x \in \R} |\mu((-\infty,x]) - \nu((-\infty,x])|.$$
\end{itemize}

\section{Introduction}

\subsection{Introduction to quasi-stationarity}

Let $(X_t)_{t \geq 0}$ be a time-homogeneous continuous-time Markov process living on a state space $(E \cup \{\d\}, \cE)$, where $\d \not \in E$ is an absorbing state for the process $X$, which means that $X_{t} = \d$ conditioned to $\{X_s = \d\}$ for all $s \leq t$, and $\cE$ is a $\sigma$-field associated to the state space $E$\footnote{$X$ is assumed to satisfy the time-homogeneous Markov property with respect to its natural filtration. $E$ and sample paths of $X$ are assumed to be equipped with appropriate $\sigma$-fields; for instance, and most usually, there may exist a Polish topology on $E$ under which $X$ is cadlag, although topology plays no role in the present work.}. Denote by $\tau_\d$ the hitting time of $\d$ by the process $X$. We associate to the process $X$ a family of probability measure $(\P_x)_{x \in E \cup \{\d\}}$ such that $\P_x[X_0 = x] = 1$ for any $x \in E \cup \{\d\}$. For any probability measure $\mu \in \cM_1(E \cup \{\d\})$, define $\P_\mu := \int_{E \cup \{\d\}} \mu(dx) \P_x$, and denote $\E_x$ and $\E_\mu$ the associated expectations. Moreover, denote by $(\cF_t)_{t \geq 0}$ the natural filtration of the process $X$.

In this paper, we assume that the process $X$ admits a \textit{quasi-stationary distribution}, defined as a probability measure $\alpha \in \cM_1(E)$ such that, for all $t \geq 0$,
\begin{equation}\label{qsd}\P_\alpha[X_t \in \cdot | \tau_\d > t] = \alpha.\end{equation}
Such a probability measure is also a \textit{quasi-limiting distribution}, defined as a probability measure such that there exists a subset $\cD(\alpha) \subset \cM_1(E)$, called \textit{domain of attraction of $\alpha$}, such that, for all $\mu \in \cD(\alpha)$ and $A \in \cE$, 
$$\P_\mu[X_t \in A | \tau_\d > t] \underset{t \to \infty}{\longrightarrow} \alpha(A).$$
In particular, if $\alpha$ is a quasi-stationary distribution, $\alpha \in \cD(\alpha)$ by \eqref{qsd}. Conversely, we can show that any quasi-limiting distributions for $X$ satisfy \eqref{qsd} for all $t \geq 0$ (see \cite[Proposition 1]{MV2012}). In other terms, quasi-stationary and quasi-limiting distributions are equivalent notions. 

Denote by ${\lambda_0} := - \log(\P_\alpha[\tau_\d > 1])$. Then, it is well-known (see \cite[Proposition 2]{MV2012} for example) that, for all $t \geq 0$,
$$\P_\alpha[\tau_\d > t] = e^{-{\lambda_0} t},~~\forall t \geq 0.$$ 
A consequence of this property coupled with \eqref{qsd} is that, for all $t \geq 0$,
\begin{equation}\label{left-eigenmeasure}\P_\alpha[X_t \in \cdot, \tau_\d > t] = e^{-{\lambda_0} t} \alpha(\cdot).\end{equation}
Conversely, if a probability measure $\alpha$ satisfies \eqref{left-eigenmeasure} for a given $\lambda_0 > 0$, then $\alpha$ is a quasi-stationary distribution for the process $X$. In that respect, the quasi-stationary distributions for $X$ are exactly the probability left eigenmeasures for the semigroup $(P_t)_{t \geq 0}$ defined by
$$P_tf(x) := \E_x(f(X_t)\1_{\tau_\d > t}),$$
for all $t \geq 0$, $f$ belonging to a Banach space and $x \in E$. In what follows, we will use the notation
$$\mu P_t := \P_\mu(X_t \in \cdot, \tau_\d > t).$$

Also, we assume that the process $X$ admits a nonnegative function $\eta$ defined on $E$, vanishing at $\d$ and satisfying $\alpha(\eta) = 1$, such that, for all $x \in E$ and $t \geq 0$,
$$\E_x[\eta(X_t)\1_{\tau_\d > t}] = e^{-\lambda_0 t} \eta(x).$$ 
$\eta$ is therefore a right eigenfunction for the semigroup $(P_t)_{t \geq 0}$, associated to the eigenvalues $(e^{-\lambda_0 t})_{t \geq 0}$.

\subsection{The main assumption and the $Q$-process}


The main assumption on this process is the following.

\begin{assumption}
\label{assumption}
There exists a function $\psi_1 : E \to [1,+\infty)$, such that $\alpha(\psi_1) < + \infty$ and $\eta \in \L^\infty(\psi_1)$, as well as two constants $C, \gamma > 0$ such that, for any $\mu \in \cM_1(E)$ and $t \geq 0$,
    \begin{equation}\label{ergodicity}\|e^{{\lambda_0} t} \mu P_t - \mu(\eta) \alpha\|_{\psi_1} \leq C \mu(\psi_1) e^{-\gamma t}.\end{equation}
\end{assumption}
This assumption is satisfied under the general criteria Assumption (F) of \cite{CV2017c}. In particular, it is shown in \cite{CV2017c} that Assumption \ref{assumption} is satisfied for a lot of processes such as multidimensional elliptic diffusion processes or processes defined in discrete state space. In particular, we refer the reader to \cite[Sections 4 and 5]{CV2017c} for examples for which Assumption \ref{assumption} holds true. Assumption \ref{assumption} is also satisfied for general strongly Feller processes, as shown in \cite{guillin2020quasi}, and for some degenerate diffusion processes, as studied in \cite{benaim2021degenerate,lelievre2022quasi}. We refer the reader to \cite{CV2014,ferre2021more,velleret2018unique,bansaye2019non,occafrain2021convergence} for alternative criteria ensuring Assumption \ref{assumption}. \medskip

We can show (a short proof is provided later in the appendix of this paper) that Assumption \ref{assumption} implies the following one.
\begin{assumption}
\label{assumption-2}
\begin{enumerate}[i)]
\item 
Denoting $E' := \{x \in E : \eta(x) > 0\}$, the family of probability measures $(\Q_x)_{x \in E'}$ defined by
    $$\Q_x(\Gamma) := \lim_{T \to \infty} \P_x(\Gamma | \tau_\d > T),~~~~\forall t \geq 0, \forall \Gamma \in \cF_t,$$
    is well-defined. 
\item
     Under $(\Q_x)_{x \in E'}$, $X$ is a Markov process on $E'$ admitting $\beta(dx) := \eta(x) \alpha(dx)$ as an invariant probability measure. Moreover, denoting $$\psi(x) := \frac{\psi_1(x)}{\eta(x)},$$ $\beta(\psi) < + \infty$ and, for all $t \geq 0$ and $x \in E'$,
    \begin{equation}\label{ergodicity-q-process}\|\Q_x(X_t \in \cdot) - \beta\|_{\psi} \leq C \psi(x) e^{-\gamma t},\end{equation}
    where $C, \gamma > 0$ are the same constants as in \eqref{ergodicity}.
\end{enumerate} 
\end{assumption}

Since the process $X$ under $(\Q_x)_{x \in E'}$ is a Markov process, the family of operators $(Q_t)_{t \geq 0}$ defined by 
$$Q_tf(x) := \E_x^\Q(f(X_t)), ~~~~\forall t \geq 0, \forall x \in E', \forall f \in \L^\infty(\psi_1/\eta),$$
where $\E_x^\Q$ is the expectation associated to $\Q_x$, is a semigroup. In the literature  (see for example \cite[Theorem 2.7]{CV2017c}), the Markov process associated to this semigroup is called the \textit{$Q$-process}.  

Independently on the satisfaction of Assumption \ref{assumption}, \eqref{ergodicity-q-process} is satisfied when the $Q$-process satisfies the assumptions 1 and 2 in \cite{hairer2011yet}. Moreover, the inequality \eqref{ergodicity-q-process} implies, since $\eta \in \L^\infty(\psi_1)$, that, for all $x \in E'$ and $t \geq 0$,
\begin{equation*}\label{ergodicity-tv}\|\delta_x Q_t - \beta\|_{TV} \leq C \textcolor{black}{\|\eta\|_{\L^\infty(\psi_1)}}\frac{\psi_1(x)}{\eta(x)}e^{-\gamma t},\end{equation*}
where $\|\cdot\|_{TV}$ denotes the total variation norm. 

\subsection{The main result}

A consequence of Assumption \ref{assumption} is that the probability measure $\beta$ is a \textit{quasi-ergodic distribution} for the process $X$. That is, for all bounded measurable function $f$ and $\mu \in \cM_1(E)$ satisfying $\mu(\psi_1) < + \infty$ and $\mu(\eta) > 0$, the convergence
\begin{equation}\label{qed}\E_\mu\left[\frac{1}{t} \int_0^t f(X_s)ds \middle| \tau_\d > t\right] \underset{t \to \infty}{\longrightarrow} \beta(f)
\end{equation}
holds true. This property is a consequence of the following lemma, whose the proof is postponed to the appendix of this paper.

\begin{lemma}
\label{lemma-q-proc}
For all $x \in E'$, $t \geq 0$ and $\Gamma \in \cF_t$,
\begin{equation} \label{expr-q-proc}\Q_x(\Gamma) = e^{\lambda_0 t} \frac{P_t [\eta \1_\Gamma](x)}{\eta(x)}.\end{equation}
Moreover, there exists a constant $C' > 0$ such that, for all $\mu \in \cM_1(E)$, $0 \leq t \leq T$ and $\Gamma \in \cF_t$, 
\begin{equation}\label{expo-conv}|\Q_{\eta \circ \mu}(\Gamma) - \P_\mu(\Gamma | \tau_\d > T)| \leq C' \frac{\mu(\psi_1)}{\mu(\eta)} e^{-\gamma (T-t)}.\end{equation}
\end{lemma}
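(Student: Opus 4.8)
The plan is to prove the two assertions separately, starting with the explicit formula \eqref{expr-q-proc} and then deducing the exponential comparison \eqref{expo-conv} from it together with the ergodicity bound \eqref{ergodicity}.

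\emph{Formula \eqref{expr-q-proc}.} Fix $x \in E'$, $t \geq 0$ and $\Gamma \in \cF_t$. By definition, $\Q_x(\Gamma) = \lim_{T \to \infty} \P_x(\Gamma \mid \tau_\d > T)$. For $T \geq t$, I would write $\P_x(\Gamma, \tau_\d > T)$ and use the Markov property at time $t$ on the event $\Gamma \cap \{\tau_\d > t\} \in \cF_t$: since $\Gamma \subset \{\tau_\d > t\}$ (here one should note that, as $\Gamma \in \cF_t$ and $\d$ is absorbing, one may intersect with $\{\tau_\d > t\}$ without loss, because on $\Gamma \cap \{\tau_\d \le t\}$ the process is already absorbed), one gets
$$\P_x(\Gamma, \tau_\d > T) = \E_x\!\left[\1_\Gamma \1_{\tau_\d > t}\, \P_{X_t}(\tau_\d > T - t)\right] = \E_x\!\left[\1_\Gamma \1_{\tau_\d > t}\, e^{-\lambda_0 (T-t)}\,\frac{\eta(X_t)}{?}\right]$$
— more precisely one uses $\P_z(\tau_\d > s) \sim$ (constant)$\,e^{-\lambda_0 s}\eta(z)$ only asymptotically, so instead I would argue directly: $\P_x(\Gamma, \tau_\d > T) = \E_x[\1_\Gamma \1_{\tau_\d > t} g_{T-t}(X_t)]$ with $g_s(z) := \P_z(\tau_\d > s)$, and $\P_x(\tau_\d > T) = P_{t}[g_{T-t}](x) = \E_x[\1_{\tau_\d > t} g_{T-t}(X_t)]$. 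Dividing and using Assumption \ref{assumption} applied to $\mu = \delta_z$, namely $e^{\lambda_0 s} g_s(z) = e^{\lambda_0 s}\delta_z P_s(E) \to \eta(z)$ with the uniform error $C\psi_1(z)e^{-\gamma s}$, gives in the limit $T \to \infty$
$$\Q_x(\Gamma) = \frac{\E_x[\1_\Gamma \1_{\tau_\d > t}\, \eta(X_t)]}{\eta(x)} = \frac{P_t[\eta \1_\Gamma](x)}{\eta(x)} \cdot \lim_{T\to\infty} \frac{e^{\lambda_0(T-t)} e^{-\lambda_0(T-t)}}{1},$$
and the normalization $\E_x[\1_{\tau_\d>t}\eta(X_t)] = e^{-\lambda_0 t}\eta(x)$ from the eigenfunction relation supplies the factor $e^{\lambda_0 t}$. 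Dominated convergence, justified by $\eta \in \L^\infty(\psi_1)$ and $\mu(\psi_1)<\infty$ type bounds on $\delta_x$, legitimizes passing the limit inside.

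\emph{Comparison \eqref{expo-conv}.} Using \eqref{expr-q-proc} one has, for $\Gamma \in \cF_t$,
$$\Q_{\eta \circ \mu}(\Gamma) = \int_E \eta\circ\mu(dx)\, \Q_x(\Gamma) = \frac{1}{\mu(\eta)} \int_E \mu(dx)\, e^{\lambda_0 t} P_t[\eta\1_\Gamma](x) = \frac{e^{\lambda_0 t}}{\mu(\eta)}\, (\mu P_t)[\eta \1_\Gamma],$$
while $\P_\mu(\Gamma \mid \tau_\d > T) = \frac{\P_\mu(\Gamma, \tau_\d > T)}{\P_\mu(\tau_\d > T)}$, and by the Markov property at time $t$, $\P_\mu(\Gamma, \tau_\d > T) = (\mu P_t)\big[\1_\Gamma\, g_{T-t}\big]$ and $\P_\mu(\tau_\d > T) = (\mu P_t)[g_{T-t}]$ with $g_s(z) = \P_z(\tau_\d > s)$. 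Now insert $e^{\lambda_0(T-t)} g_{T-t} = \eta + r_{T-t}$ where $\|r_s\|_{\L^\infty(\psi_1)} \le C e^{-\gamma s}$ by Assumption \ref{assumption} (applied with $\mu = \delta_z$, using $\|\,\cdot\,\|_{\psi_1} \ge \|\,\cdot\,\|_{TV}$-type control on the mass, i.e. testing against the constant $1 \le \psi_1$). Then both numerator and denominator of $\P_\mu(\Gamma\mid\tau_\d>T)$ differ from the corresponding quantities with $\eta$ in place of $e^{\lambda_0(T-t)}g_{T-t}$ by at most $C e^{-\gamma(T-t)} (\mu P_t)(\psi_1) \le C e^{-\gamma(T-t)} e^{-\lambda_0 t}\alpha(\psi_1)\,(\text{stuff})$; more simply $(\mu P_t)(\psi_1) \le e^{-\lambda_0 t}(\mu(\eta)\alpha(\psi_1) + C\mu(\psi_1))$ again by \eqref{ergodicity}. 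A standard estimate of the form $\left|\frac{a}{b} - \frac{a'}{b'}\right| \le \frac{|a-a'|}{b'} + \frac{a'|b-b'|}{b b'}$, together with the lower bound $(\mu P_t)[\eta] = e^{-\lambda_0 t}\mu(\eta) > 0$ on the denominator (which controls $b = \P_\mu(\tau_\d>T)$ from below up to the exponentially small correction, for $T - t$ large; for $T-t$ bounded the inequality is trivial after enlarging $C'$), yields
$$\bigl|\Q_{\eta\circ\mu}(\Gamma) - \P_\mu(\Gamma\mid\tau_\d>T)\bigr| \le C' \frac{\mu(\psi_1)}{\mu(\eta)} e^{-\gamma(T-t)}$$
for a suitable constant $C'$ depending only on $C,\gamma,\lambda_0,\alpha(\psi_1),\|\eta\|_{\L^\infty(\psi_1)}$.

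\emph{Main obstacle.} The delicate point is the quotient estimate: one must control the denominator $\P_\mu(\tau_\d > T)$ from below uniformly, and the natural lower bound $\P_\mu(\tau_\d>T) \ge e^{-\lambda_0 T}\mu(\eta) - C e^{-\lambda_0 t - \gamma(T-t)}\mu(\psi_1)$ is only useful once $T-t$ exceeds a threshold; handling small $T-t$ requires separately remarking that the left-hand side is bounded by $1$ and absorbing this into $C'$ via $e^{-\gamma(T-t)} \ge e^{-\gamma t_0}$ on a compact range — but since $\mu(\psi_1)/\mu(\eta) \ge \alpha(\eta)/\ldots$ need not be bounded below, one must instead note $\mu(\psi_1) \ge \mu(\eta)/\|\eta\|_{\L^\infty(\psi_1)}$ so that $\mu(\psi_1)/\mu(\eta) \ge 1/\|\eta\|_{\L^\infty(\psi_1)}$, making the trivial bound $1 \le C' \frac{\mu(\psi_1)}{\mu(\eta)} e^{-\gamma(T-t)}$ valid on bounded $T-t$ for $C'$ large enough. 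Assembling the two regimes gives the claim.
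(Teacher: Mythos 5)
Your overall approach is the same as the paper's: for \eqref{expr-q-proc}, apply the Markov property at time $t$ and use \eqref{ergodicity} pointwise (applied to $\delta_z$) to conclude that $e^{\lambda_0(T-t)}\P_{X_t}(\tau_\d>T-t)\to\eta(X_t)$ with error $C\psi_1(X_t)e^{-\gamma(T-t)}$, then take $T\to\infty$ in the ratio $\P_x(\Gamma,\tau_\d>T)/\P_x(\tau_\d>T)$; for \eqref{expo-conv}, integrate over $\mu$ and estimate the quotient, falling back to the trivial bound where the lower bound on the denominator degenerates. So the plan is essentially that of the paper, and the estimates you invoke (\eqref{ergodicity} applied to $g_{T-t}$, $(\mu P_t)\eta = e^{-\lambda_0 t}\mu(\eta)$, $(\mu P_t)\psi_1 \le e^{-\lambda_0 t}(\alpha(\psi_1)\mu(\eta)+C\mu(\psi_1))$) are the right ones.

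The one step that as written would not close is the fall-back for the quotient estimate. The regime in which the lower bound on $\P_\mu(\tau_\d>T)$ is not useful is not ``$T-t$ in a compact range'' for a fixed $t_0$: it is $T$ (equivalently, in your version, $T-t$) below a $\mu$-\emph{dependent} threshold of order $\tfrac{1}{\gamma}\log\big(2C\mu(\psi_1)/\mu(\eta)\big)$, which is unbounded over admissible $\mu$. Saying the trivial bound ``is valid on bounded $T-t$ for $C'$ large enough'' therefore does not cover this regime. The paper's fix is direct and does not even require your observation $\mu(\psi_1)/\mu(\eta)\ge 1/\|\eta\|_{\L^\infty(\psi_1)}$: it applies \eqref{ergodicity} to $\mu$ itself (not through $\mu P_t$), so the threshold is $T\ge \tfrac{1}{\gamma}\log(2C\mu(\psi_1)/\mu(\eta))$; below that threshold one has $T-t\le T$, hence $e^{-\gamma(T-t)}\ge e^{-\gamma T}\ge \mu(\eta)/(2C\mu(\psi_1))$, so that $2\le 4C\,\tfrac{\mu(\psi_1)}{\mu(\eta)}e^{-\gamma(T-t)}$ and the trivial bound absorbs automatically — the $\log$ in the threshold exactly cancels the growth. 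Your observation $\mu(\psi_1)/\mu(\eta)\ge 1/\|\eta\|_{\L^\infty(\psi_1)}$ does let you close the argument along your route (where the threshold involves $\alpha(\psi_1)+C\mu(\psi_1)/\mu(\eta)$ instead of $\mu(\psi_1)/\mu(\eta)$), but you would need to state it as ``$T-t$ below the $\mu$-dependent threshold'' and then verify that $\tfrac{\mu(\psi_1)}{\mu(\eta)}e^{-\gamma(T-t)}$ is bounded below there, which is an increasing function of $\mu(\psi_1)/\mu(\eta)$ — rather than appealing to a fixed compact range. Also note the typo in your displayed lower bound for $\P_\mu(\tau_\d>T)$: the second term should carry $e^{-\lambda_0 T-\gamma(T-t)}$, not $e^{-\lambda_0 t-\gamma(T-t)}$; with $e^{-\lambda_0 t}$ the correction term does not vanish relative to the leading one.
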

In particular, the property \eqref{expo-conv} implies \eqref{qed} as shown in \cite{CV2016}. More precisely, we can show that the previous lemma implies the corollary below:
\begin{corollary}
For all $\mu \in \cM_1(E)$ such that $\mu(\psi_1) < + \infty$ and $\mu(\eta) > 0$, for all $f$ bounded,
$$\E_\mu\left(\left|\frac{1}{t}\int_0^t f(X_s)ds - \beta(f)\right|^2 \middle| \tau_\d > t\right) \underset{t \to \infty} {\longrightarrow} 0,$$
implying \eqref{qed} and that, for all $\mu \in \cM_1(E)$ and $f$ bounded, for all $\epsilon > 0$,
$$\P_\mu\left(\left|\frac{1}{t}\int_0^t f(X_s)ds - \beta(f)\right| \geq \epsilon \middle| \tau_\d > t\right) \underset{t \to \infty}{\longrightarrow} 0.$$
\end{corollary}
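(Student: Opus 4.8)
The plan is to deduce the corollary directly from Lemma \ref{lemma-q-proc}, specifically from the ergodic bound \eqref{expo-conv}, by first establishing the $L^2$-convergence of the conditional time-averages and then obtaining the other two assertions as immediate consequences. First I would fix $\mu \in \cM_1(E)$ with $\mu(\psi_1) < +\infty$ and $\mu(\eta) > 0$, and a bounded measurable $f$; without loss of generality (replacing $f$ by $f - \beta(f)$) I may assume $\beta(f) = 0$ and $\|f\|_\infty \leq 1$. Expanding the square, the conditional second moment
\[
\E_\mu\!\left(\left(\frac1t\int_0^t f(X_s)\,ds\right)^{\!2}\,\middle|\,\tau_\d > t\right)
= \frac{2}{t^2}\int_0^t\!\!\int_0^s \E_\mu\big(f(X_s)f(X_r)\,\big|\,\tau_\d > t\big)\,dr\,ds,
\]
so the task reduces to controlling the conditional two-time correlations $\E_\mu(f(X_s)f(X_r)\mid\tau_\d>t)$ uniformly for $0\le r\le s\le t$.

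Next I would compare these conditional correlations with the corresponding quantities under the $Q$-process started from $\eta\circ\mu$. For $0\le r\le s\le t$, the random variable $f(X_s)f(X_r)$ is $\cF_s$-measurable and bounded by $1$; writing it as an integral of indicators (or using that \eqref{expo-conv} extends from indicators of $\cF_t$-events to bounded $\cF_t$-measurable functions with $\|\cdot\|_\infty \le 1$ by linearity and a limiting argument), \eqref{expo-conv} with $t$ replaced by $s$ gives
\[
\left|\E_{\eta\circ\mu}^{\Q}\big(f(X_s)f(X_r)\big) - \E_\mu\big(f(X_s)f(X_r)\,\big|\,\tau_\d>t\big)\right| \le C'\,\frac{\mu(\psi_1)}{\mu(\eta)}\,e^{-\gamma(t-s)}.
\]
On the $Q$-process side, since $\beta$ is invariant for $(Q_t)$ and the Markov property plus \eqref{ergodicity-q-process} control the decay of correlations, one has $\E_{\eta\circ\mu}^{\Q}(f(X_s)f(X_r)) = \E_{\eta\circ\mu}^{\Q}\big(f(X_r)\,Q_{s-r}f(X_r)\big)$, and since $\beta(f)=0$ and $|f|\le1$ this is bounded in absolute value by $C\,\E_{\eta\circ\mu}^{\Q}(\psi(X_r))\,e^{-\gamma(s-r)}$; a further application of \eqref{ergodicity-q-process} (together with $\beta(\psi)<+\infty$ and $(\eta\circ\mu)(\psi) = \mu(\psi_1)/\mu(\eta) < +\infty$) shows $\E_{\eta\circ\mu}^{\Q}(\psi(X_r))$ stays bounded uniformly in $r\ge 0$ by some constant depending only on $\mu(\psi_1)/\mu(\eta)$. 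Substituting both bounds into the double integral, splitting the region $\{0\le r\le s\le t\}$ according to whether $s-r$ and $t-s$ are large or small, and using that $\frac{2}{t^2}\int_0^t\!\int_0^s(\cdots)\,dr\,ds$ of an integrand of the form $e^{-\gamma(s-r)} + e^{-\gamma(t-s)}$ is $O(1/t)$, one concludes that the conditional second moment tends to $0$ as $t\to\infty$. This proves the first displayed limit of the corollary.

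The remaining two conclusions then follow formally. The centred $L^2$-convergence gives, via $\left|\E_\mu(Y\mid A)\right| \le \E_\mu(|Y|\mid A) \le \E_\mu(Y^2\mid A)^{1/2}$ with $Y = \frac1t\int_0^t f(X_s)\,ds - \beta(f)$, that $\E_\mu(\frac1t\int_0^t f(X_s)\,ds\mid\tau_\d>t)\to\beta(f)$, which is exactly \eqref{qed}; and the convergence in conditional probability follows from the conditional Markov (Bienaymé--Chebyshev) inequality
\[
\P_\mu\!\left(\left|\frac1t\int_0^t f(X_s)\,ds - \beta(f)\right|\ge\epsilon\,\middle|\,\tau_\d>t\right) \le \frac{1}{\epsilon^2}\,\E_\mu\!\left(\left|\frac1t\int_0^t f(X_s)\,ds - \beta(f)\right|^2\,\middle|\,\tau_\d>t\right),
\]
provided $\mu(\psi_1)<+\infty$ and $\mu(\eta)>0$; the case of general $\mu$ with possibly $\mu(\psi_1)=+\infty$ is handled, as in \cite{CV2016}, by a truncation/domination argument reducing to the previous case (for instance restricting $\mu$ to a sublevel set $\{\psi_1\le n\}$ and letting $n\to\infty$). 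The main obstacle I anticipate is the bookkeeping in the double-integral estimate — making sure the two error terms (the one from comparing the conditioned law to the $Q$-process, which decays in $t-s$, and the one from mixing of the $Q$-process, which decays in $s-r$) are each integrated correctly against $\frac{2}{t^2}\,dr\,ds$ to yield an $O(1/t)$ bound — together with justifying that \eqref{expo-conv}, stated for $\Gamma\in\cF_t$, applies to bounded $\cF_s$-measurable functionals of the path.
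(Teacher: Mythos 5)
Your proposal is correct and is precisely the ``short proof'' the paper alludes to without spelling out: the paper only says the result can be obtained by adapting \cite{occafrain2020ergodic} or \cite{he2021exponential} using Lemma~\ref{lemma-q-proc}, and your argument is the natural fill-in. In particular, the combination of (i) replacing the conditional two-time correlation $\E_\mu(f(X_r)f(X_s)\mid \tau_\d>t)$ by the $Q$-process correlation $\E_{\eta\circ\mu}^\Q(f(X_r)f(X_s))$ at a cost $O(e^{-\gamma(t-s)})$ via \eqref{expo-conv}, and (ii) controlling that $Q$-process correlation by $O(e^{-\gamma(s-r)})$ via \eqref{ergodicity-q-process} together with $\sup_r\E_{\eta\circ\mu}^\Q(\psi(X_r))\le\beta(\psi)+C\mu(\psi_1)/\mu(\eta)<\infty$, feeds into the double integral $\frac{2}{t^2}\int_0^t\int_0^s(\cdots)\,dr\,ds$ and yields exactly the $O(1/t)$ rate the paper claims (each of $\frac{2}{t^2}\int_0^t\int_0^s e^{-\gamma(s-r)}\,dr\,ds$ and $\frac{2}{t^2}\int_0^t\int_0^s e^{-\gamma(t-s)}\,dr\,ds$ is $\le 2/(\gamma t)$). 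The passage from $L^2$ to \eqref{qed} via Cauchy--Schwarz and to convergence in probability via Chebyshev is routine. Two minor points worth flagging: the extension of \eqref{expo-conv} from indicators to bounded $\cF_s$-measurable functionals is simply the statement that the estimate controls the total-variation distance of the two restricted laws (so costs a harmless factor of $2$); and the third assertion as stated in the paper drops the hypotheses $\mu(\psi_1)<\infty$, $\mu(\eta)>0$ --- your remark that a truncation/domination step is needed there is appropriate, though the paper itself does not supply it either.
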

Provided Lemma \ref{lemma-q-proc}, a short proof can be obtained adapting the proofs in \cite{occafrain2020ergodic} or \cite{he2021exponential}, providing even $1/t$ as speed of convergence. \medskip

The aim of this paper is to prove a central limit theorem for processes satisfying Assumption \ref{assumption}, conditioned not to be absorbed up to the time $t$. Existing results stating a conditional central limit theorem for absorbing discrete-time Markov chains can be found in \cite{CMSM,matthews1970central,al1994central,szubarga1985functional,bolthausen1976functional,iglehart1974functional}. In particular, in \cite[Section 3.6]{CMSM}, it is stated that, for any Markov chain $(X_n)_{n \in \Z_+}$ defined on a finite state space $E \cup \{\d\}$ (absorbed at $\d$) whose matrix $(\P_i(X_1 = j))_{i,j \in E}$ is irreducible and aperiodic, one has that, for all  function $f$ such that $\beta(f) = 0$, the limit
$$\theta^2 := \lim_{n \to \infty} \frac{1}{n} \E_\alpha\left(\left(\sum_{k=0}^n f(X_k)\right)^2 \middle| \tau_\d > n\right)$$
is well-defined. If moreover $\theta^2 \ne 0$, one obtains
$$\lim_{n \to \infty} \P_\alpha \left[\frac{1}{\sqrt{n}} \sum_{k=0}^n f(X_k) \leq y \middle| \tau_\d > n \right] = 
    \int_{- \infty}^y \frac{1}{\sqrt{2\pi \theta^2}} e^{-\frac{x^2}{2 \theta^2}}dx,$$ for all $y \in \R$. This result is extended to all initial distributions in \cite{matthews1970central}, where it is also claimed that the limiting Gaussian distribution is the same as the one obtained in the central limit theorem applied to the $Q$-process (i.e. same limiting variance).   

The main result of this paper is then the following.
\begin{theorem}
\label{berry-esseen-1}
Assume that the process $(X_t)_{t \geq 0}$ satisfies Assumption \ref{assumption}. 

Then, for all $f \in \L^\infty(\1_{E})$ such that $\sigma_f^2 > 0$ and $\mu \in \cM_1(E)$ such that $\mu(\psi_1) < + \infty$ and $\mu(\eta) > 0$,
$$\P_\mu\left(\sqrt{t} \left[\frac{1}{t}\int_0^t f(X_s)ds - \beta(f)\right] \in \cdot \middle| \tau_\d > t\right) \underset{t \to + \infty}{\overset{w}{\longrightarrow}} \cN(0,\sigma_f^2),$$
where $w$ refers to the weak convergence of measures, where $\cN(0,\sigma_f^2)$ refers to the centered Gaussian variable of variance 
\begin{equation}\label{variance}\sigma_f^2 := 2\int_0^\infty \Cov_\beta^\Q(f(X_0),f(X_s))ds,\end{equation}
where $\Cov_\beta^\Q$ refers to the covariance with respect to the probability measure $\Q_\beta := \int_{E'} \beta(dx) \Q_x$. 
\end{theorem}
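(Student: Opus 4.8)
The plan is to transfer the problem to the $Q$-process and reduce it to an ordinary (ergodic) central limit theorem. The starting observation is Lemma~\ref{lemma-q-proc}: the conditional law of the path $(X_s)_{0 \le s \le t}$ under $\P_\mu(\,\cdot\mid\tau_\d>T)$ is, up to an error of order $e^{-\gamma(T-t)}$ (measured against $\mu(\psi_1)/\mu(\eta)$), equal to the law of the $Q$-process started from $\eta\circ\mu$. Since we want the time-$t$ conditioning, i.e.\ $T=t$, we must not compare the whole additive functional up to time $t$ with a $Q$-process functional up to time $t$ — the error $e^{-\gamma(T-t)}$ degenerates at $T=t$. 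The standard fix (as in \cite{CV2016}) is a two-time-scale splitting: write the conditioning time as $t$, cut the trajectory at an intermediate time $t-t^\theta$ or $t - s_t$ with $1 \ll s_t \ll t$ (say $s_t = \sqrt{t}$ up to logarithmic factors, or simply $s_t = t/2$ works for weak convergence since the contribution of the last block is negligible after normalisation by $\sqrt t$). On the event $\tau_\d > t$, the additive functional $\frac1{\sqrt t}\int_0^t f(X_s)\,ds$ differs from $\frac1{\sqrt t}\int_0^{t-s_t} f(X_s)\,ds$ by a term of size $O(s_t/\sqrt t)$, which tends to $0$ provided $s_t = o(\sqrt t)$; so it suffices to prove the CLT for the truncated functional, which is $\cF_{t-s_t}$-measurable, and then invoke \eqref{expo-conv} with $T=t$, $t \rightsquigarrow t-s_t$, giving an error $C'\,\frac{\mu(\psi_1)}{\mu(\eta)} e^{-\gamma s_t} \to 0$ since $s_t \to \infty$.

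Concretely, I would proceed as follows. \textbf{Step 1 (truncation).} Fix $s_t \to \infty$ with $s_t = o(\sqrt t)$ and set $Y_t := \frac{1}{\sqrt t}\int_0^{t - s_t} (f(X_s) - \beta(f))\,ds$. Using $f \in \L^\infty(\1_E)$ (so $|f|\le\|f\|_{\L^\infty(\1_E)}$), one has $\bigl|\sqrt t[\frac1t\int_0^t f\,ds - \beta(f)] - Y_t\bigr| \le 2\|f\|_{\L^\infty(\1_E)} s_t/\sqrt t \to 0$ deterministically, hence in $\P_\mu(\,\cdot\mid\tau_\d>t)$-probability. \textbf{Step 2 (change of measure).} For any bounded continuous $g$, apply \eqref{expo-conv} to the bounded $\cF_{t-s_t}$-measurable functional $\Gamma \rightsquigarrow g(Y_t)$ (after rewriting $g(Y_t)$ as a function measurable w.r.t.\ $\cF_{t-s_t}$, which it is): this yields
$$\Bigl|\E_\mu\bigl(g(Y_t)\mid\tau_\d>t\bigr) - \E^\Q_{\eta\circ\mu}\bigl(g(Y_t)\bigr)\Bigr| \le \|g\|_\infty\, C'\,\frac{\mu(\psi_1)}{\mu(\eta)}\, e^{-\gamma s_t} \xrightarrow[t\to\infty]{} 0.$$
\textbf{Step 3 (CLT for the $Q$-process).} It remains to show that under $\Q_{\eta\circ\mu}$, $Y_t = \frac{1}{\sqrt t}\int_0^{t-s_t}(f(X_s)-\beta(f))\,ds \Rightarrow \cN(0,\sigma_f^2)$. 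Since $\sqrt{t-s_t}/\sqrt t \to 1$, this is equivalent to the CLT $\frac{1}{\sqrt T}\int_0^T (f(X_s)-\beta(f))\,ds \Rightarrow \cN(0,\sigma_f^2)$ for the $Q$-process started from the (fixed) initial distribution $\eta\circ\mu$, with $\beta$ the invariant law and $\sigma_f^2$ as in \eqref{variance}. This is exactly the object of the ``central limit theorem for ergodic Markov processes'' announced in the abstract: the exponential ergodicity \eqref{ergodicity-q-process} in $\|\cdot\|_\psi$-norm, together with $\beta(\psi)<\infty$ and $f$ bounded (so $f \in \L^\infty(\psi)$ and $f-\beta(f)$ has $\beta$-integrable ``Poisson equation'' solution $G = \int_0^\infty Q_s(f-\beta(f))\,ds$, which converges absolutely in $\L^\infty(\psi)$ by \eqref{ergodicity-q-process}), gives the CLT by the usual Gordin/martingale-decomposition argument: write $f-\beta(f) = -\cL^\Q G$, so $M_t := G(X_t) - G(X_0) + \int_0^t (f(X_s)-\beta(f))\,ds$ is a $\Q$-martingale; the boundary terms $G(X_t)/\sqrt t, G(X_0)/\sqrt t$ vanish (using $\|G\|_{\L^\infty(\psi)}<\infty$ and $\sup_{s}\E^\Q_{\eta\circ\mu}\psi(X_s)<\infty$, which follows from \eqref{ergodicity-q-process} and $\beta(\psi)<\infty$), and the martingale CLT applies with bracket converging to $\sigma_f^2 t$ by the ergodic theorem \eqref{qed} applied to the $Q$-process, the identification of the limit variance with $2\int_0^\infty \Cov^\Q_\beta(f(X_0),f(X_s))\,ds$ being the standard computation $\E^\Q_\beta[G^2(X_0)\,\text{``}\,] = \ldots$ Independence of the limit on the initial law $\eta\circ\mu$ follows from \eqref{ergodicity-q-process}.

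\textbf{Main obstacle.} The delicate point is Step~3, and within it the justification that the martingale CLT is applicable rigorously at the level of generality assumed here: the generator $\cL^\Q$ of the $Q$-process is not described explicitly, so ``$f-\beta(f)=-\cL^\Q G$'' must be understood in a resolvent/semigroup sense — one should instead work directly with $G = \int_0^\infty Q_s(f-\beta(f))\,ds$ and verify $Q_t G - G = -\int_0^t Q_s(f-\beta(f))\,ds$ by Fubini and the semigroup property, which makes $M_t$ a martingale by the Markov property without ever invoking a generator. One then needs the functional (continuous-time) martingale CLT: checking the two hypotheses — convergence of the predictable bracket $\langle M\rangle_t/t \to \sigma_f^2$ and a Lindeberg/jump condition — requires a supplementary ergodic-theorem input for the bracket (which for a general càdlàg process is itself an additive functional $\int_0^t h(X_{s^-})\,ds + \sum_{s\le t}(\Delta G(X_s))^2$, to which one again applies the $L^2$-ergodic theorem of the Corollary, now for the $Q$-process). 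I expect this is precisely why the author first proves a standalone CLT for ergodic processes and then only has to plug in \eqref{ergodicity-q-process}; I would structure the argument the same way, citing that intermediate theorem as a black box for Step~3 and devoting the bulk of the work to Steps~1–2, which are the genuinely ``conditional'' part.
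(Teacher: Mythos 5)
Your proposal is correct in outline and takes a genuinely different route from the paper's for the ``conditional'' part. The paper does \emph{not} truncate the trajectory: it works directly with the additive functional up to the full conditioning time $t$, expanding both characteristic functions as power series in the moments of $\int_0^t f(X_s)\,ds$, pulling out the last time point via the Markov property, and comparing term by term through a kernel
$C_{\mu,g}(s,t,x)$ whose size is controlled by Assumption~\ref{assumption} (their Steps 1--2). Resumming the series gives the fully quantitative bound
$\bigl|\E_\mu(e^{\frac{i\omega}{\sqrt t}\int_0^t f\,ds}\mid\tau_\d>t)-\E^\Q_{\eta\circ\mu}(e^{\frac{i\omega}{\sqrt t}\int_0^t f\,ds})\bigr|\le C''\,|\omega|\,\mu(\psi_1)/(\mu(\eta)\sqrt t)$, and the result then follows from Theorem~\ref{moment-ordre-pair} exactly as in your Step~3. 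Your approach instead cuts at $t-s_t$ and pushes the whole $\cF_{t-s_t}$-measurable functional across via the total-variation bound \eqref{expo-conv} from Lemma~\ref{lemma-q-proc}. Both are valid. What each buys: your truncation is more modular and only needs the generic $\sigma$-algebra estimate \eqref{expo-conv}, making the conditional step almost a one-liner; the paper's no-truncation comparison is more intricate but yields a rate that is uniform in the test function (characteristic function error $\sim|\omega|/\sqrt t$), which motivates the Berry--Esseen remark at the end of Section~3 --- with truncation, optimising the competing errors $s_t/\sqrt t$ (Step 1) and $e^{-\gamma s_t}$ (Step 2) only gives $O(\log t/\sqrt t)$. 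Two small points on your write-up: the parenthetical claim that ``$s_t=t/2$ works'' is wrong --- then the deterministic truncation error in Step~1 is of order $\sqrt t$, not $o(1)$; you need $s_t=o(\sqrt t)$ as in your actual Step~1. And \eqref{expo-conv} is stated for events; the extension to bounded $\cF_{t-s_t}$-measurable test functions is routine (the left-hand side of \eqref{expo-conv} is a total-variation bound on the restricted $\sigma$-algebra, giving a factor $2\|g\|_\infty$), but should be said. On your Step~3, the Gordin/martingale sketch is a legitimate alternative, but the paper deliberately avoids the generator and the martingale CLT machinery by proving Theorem~\ref{moment-ordre-pair} via a method of moments; since you ultimately plan to treat the $Q$-process CLT as a black box, this difference is immaterial for the present theorem.
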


In particular, \eqref{ergodicity-q-process} implies that $\sigma_f^2 < + \infty$ for any $f$ bounded by $1$, since, assuming without loss of generality that $\beta(f) = 0$, for all $k \geq 0$,
\begin{equation}\label{majoration-1}|\E_\beta^\Q(f(X_0)f(X_k))| = |\E_\beta^\Q(f(X_0)\E_{X_0}^\Q(f(X_k))| \leq C \beta(\psi_1/\eta)e^{-\gamma k}.\end{equation}

This paper is only interested in processes conditioned not to be absorbed by absorbing states. Nevertheless, the following proofs can be adapted to general non-conservative semigroups satisfying Assumption \ref{assumption}. Some examples of such semigroups have been studied in \cite{ferre2021more,bansaye2019non,CV2020,villemonais2022quasi}.  

Theorem \ref{berry-esseen-1} will be proved at the third section. To prove it, we first need to show a central limit theorem for the $Q$-process satisfying \eqref{ergodicity-q-process}. In particular, up to my knowledge, the papers dealing with central limit theorems for Markov processes require stronger hypotheses than \eqref{ergodicity-q-process} (see the references provided in Section 2). That is why the second section aims to prove a central limit theorem for general ergodic Markov processes, which could be interesting and useful beyond the framework of quasi-stationarity. 

To conclude, the paper ends with an appendix showing the implication Assumption 1 $\Rightarrow$ Assumption 2 and Lemma \ref{lemma-q-proc} stated above. In particular, even if the existence of a quasi-ergodic distribution is quite classical assuming that Lemma \ref{lemma-q-proc} holds true (see for example \cite{CV2016} for a simple proof of this statement), the lemma itself is not clearly stated in the literature for processes satisfying Assumption \ref{assumption} (\cite{CV2016}, for example, states it under stronger conditions). This is why a short proof is provided in this appendix.          

\section{Central limit theorem for Markov processes}
\label{tcl}

This section aims to establish a central limit theorem for Markov processes satisfying the condition \eqref{ergodicity-q-process}. In the literature, central limit theorems for continuous-time Markov processes have, among others, been established in \cite{komorowski2012central,cattiaux2012central,lezaud2001chernoff}. In particular, the papers \cite{komorowski2012central,cattiaux2012central} made use of central limit theorems for martingales; the paper \cite{lezaud2001chernoff} used Kato's theory applied to analytically perturbed operators. 

In this paper, a central limit theorem will be proved for Markov processes studying the convergence of the moments of $\frac{1}{\sqrt{t}} \int_0^t f(X_s)ds$, for bounded functions $f$ such that $\beta(f) = 0$ and $\sigma_f^2 > 0$. Up to my knowledge, this method to establish a central limit theorem for (non-stationary) Markov processes is new. However, this method is difficult to apply for discrete-time processes; we refer to \cite{dobrushin1956central,derriennic2003central,kurtz1981central,haggstrom2005central,cuny2009pointwise} for central limit theorems for discrete-time Markov chains. 

In all this section, we deal with a general Markov process $(X_t)_{t \geq 0}$ defined on a state space $E'$. We denote by $(\Q_x)_{x \in E'}$ a family of probability measure such that, for all $x \in E'$, $\Q_x(X_0 = x) = 1$, for all probability measure $\mu \in \cM_1(E')$, $\Q_\mu := \int_{E'} \Q_x \mu(dx)$. We denote by $\E_\cdot^\Q$ and $\Cov_\cdot^\Q$ the expectation and the covariance associated to the probability measure $\Q_\cdot$, respectively. 

We emphasize that this section can be read independently on the rest of the paper. In particular, $(X_t)$ has no link with the $Q$-process with this section. 

In all what follows, we denote by $\cB_1(E')$ the set of the bounded by $1$ measurable functions defined over $E'$.

We introduce now the only assumption used all along this section: 
\begin{assumption}
\label{expo-ergodicity}
The process $(X_t)_{t \geq 0}$ admits an invariant measure $\beta$, and there exists a function $\psi : E' \to [c,+\infty)$ ($c > 0$) and two constants $C, \gamma > 0$ such that, for all $x \in E'$ and $t \geq 0$,
$$\|\Q_x(X_t \in \cdot) - \beta\|_{\psi} \leq C e^{-\gamma t} \psi(x).$$
\end{assumption}

In accordance with the introduction, we introduce, for all bounded function $f$, the variance
$$\sigma_f^2 := 2 \int_0^\infty \Cov_\beta^\Q(f(X_0),f(X_s))ds.$$

\subsection{Convergence of the moments of $\frac{1}{\sqrt{t}} \int_0^t f(X_s)ds$}

In this subsection, the following theorem will be proved. 

\begin{theorem}
\label{moment-ordre-pair} Assume that $(X_t)_{t \geq 0}$ satisfies Assumption \ref{expo-ergodicity}. Then
there exist a positive constants $C_1$ and a sequence of positive constant $(D_k)_{k \in \N}$ such that,
for all $k \in \Z_+$, $\mu \in \cM_1(E')$ such that $\mu(\psi) < + \infty$, $f \in \cB_1(E')$ such that $\beta(f) = 0$ and $t > 0$,
\begin{equation}\label{even-moment}\left|\E_\mu^\Q\left(\frac{1}{t^k} \left(\int_0^t f(X_s)ds\right)^{2k}\right) - \frac{(2k)!}{k!} \frac{\sigma_f^{2k}}{2^k}\right| \leq (2k)! D_k C_1 \frac{k}{(k-1)!} \frac{\mu(\psi)}{t}\end{equation}
and 
$$\lim_{t \to \infty} \E_\mu^\Q\left(\frac{1}{t^k \sqrt{t}} \left(\int_0^t f(X_s)ds\right)^{2k+1}\right) = 0.$$
In particular, for all $\mu \in \cM_1(E')$ such that $\mu(\psi) < + \infty$ and $f \in \cB_1(E')$ such that $\beta(f) = 0$ and $\sigma_f^2 > 0$,
$$\Q_\mu\left(\frac{1}{\sqrt{t}}\int_0^t f(X_s)ds \in \cdot\right) \underset{t \to \infty}{\overset{w}{\longrightarrow}} \cN(0,\sigma_f^2).$$
Moreover, a suitable sequence $(D_k)_{k \in \N}$ satisfying the inequalities \eqref{even-moment} is the one defined as:
$$D_k := \left(\left(\frac{C}{\gamma}(1+\frac{\beta(\psi)}{c})\right)^{k-1} \lor 1 \right) \times \left(\frac{C^2}{c} \lor \frac{C\beta(\psi)}{c^2\gamma} \right),~~~~\forall k \in \N.$$

\end{theorem}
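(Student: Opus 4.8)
The plan is to prove the moment estimates \eqref{even-moment} by a careful induction on $k$, then deduce the central limit theorem by the method of moments. The starting point is the observation that, setting $S_t := \int_0^t f(X_s)\,ds$ with $\beta(f) = 0$, we have the exact identity
$$\E_\mu^\Q\left(S_t^m\right) = m! \int_{0 \le s_1 \le \dots \le s_m \le t} \E_\mu^\Q\left(f(X_{s_1})\cdots f(X_{s_m})\right)\,ds_1\cdots ds_m.$$
The key analytic input is that, under Assumption \ref{expo-ergodicity}, the correlations decay: by the Markov property and the ergodicity bound, $\left|\E_\nu^\Q(f(X_r))\right| = \left|\nu(Q_r f)\right| \le \left|\nu(Q_r f) - \beta(f)\right| \le C\nu(\psi) e^{-\gamma r}$ for any $\nu$, and more generally one can peel off the largest time in a product: conditioning on $\cF_{s_{m-1}}$,
$$\E_\mu^\Q\left(f(X_{s_1})\cdots f(X_{s_m})\right) = \E_\mu^\Q\left(f(X_{s_1})\cdots f(X_{s_{m-1}})\, Q_{s_m - s_{m-1}}f(X_{s_{m-1}})\right),$$
and $|Q_{s_m - s_{m-1}}f(x)| \le C\psi(x) e^{-\gamma(s_m - s_{m-1})}$. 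Iterating this, a product of $m$ evaluations is controlled by $e^{-\gamma}$ raised to (sums of) the consecutive gaps, times powers of $\psi$ evaluated along the path — which in turn are handled by $\E_\mu^\Q(\psi(X_s)) \le \mu(\psi) + \beta(\psi)$ uniformly in $s$ via Assumption \ref{expo-ergodicity} again.

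First I would treat the case $m = 2k$ even. The leading term $\frac{(2k)!}{k!}\frac{\sigma_f^{2k}}{2^k}$ is exactly the $2k$-th moment of $\cN(0,\sigma_f^2)$, i.e. the number of pairings of $\{1,\dots,2k\}$ (there are $(2k)!/(k!2^k)$ of them) each contributing $\sigma_f^{2k}$. So the combinatorial heart of the argument is to show that, in the simplex integral, the dominant contribution comes from configurations where the $2k$ times split into $k$ well-separated close pairs; each such pair integrates (after extending the inner integral to $[0,\infty)$) to $\int_0^\infty \Cov_\beta^\Q(f(X_0),f(X_s))\,ds = \tfrac12\sigma_f^2$, while all other configurations — those with an odd-size cluster, or with three or more times mutually close — are $O(1/t)$ after normalising by $t^k$. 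This is a standard cluster-expansion/pairing argument (as in classical CLT-by-moments proofs), but here the bookkeeping must be done quantitatively to extract the explicit constant $D_k$; the $\psi$-weights and the factors $C/\gamma$, $\beta(\psi)/c$ are precisely what accumulate, one factor per pair, giving the stated form of $D_k$. For the odd moments $m = 2k+1$, the same decomposition forces at least one unpaired time, producing an extra factor that vanishes after dividing by $t^k\sqrt t$; it suffices to show this term is $O(1/\sqrt t)$, which drops out in the limit.

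The induction on $k$ organises the error control: assuming the estimate for all smaller orders, one writes $S_t^{2k} = S_t^{2k-2}\cdot S_t^2$ (or peels the integral as above), isolates the "new pair" contribution, and bounds the remainder using the induction hypothesis together with the exponential decay of correlations to absorb one gap integral, which is where the geometric factor $(C/\gamma(1+\beta(\psi)/c))^{k-1}$ is generated. I expect the main obstacle to be exactly this quantitative bookkeeping: keeping the constants uniform in $t$, $\mu$ and $f$ while tracking how many $\psi$-factors and how many $1/\gamma$'s appear in each of the exponentially many cluster configurations, and checking that the "bad" configurations (odd clusters, triple collisions) really do lose a full power of $t$. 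Once \eqref{even-moment} and the odd-moment limit are in hand, the final weak-convergence statement follows immediately from the method of moments, since the Gaussian law $\cN(0,\sigma_f^2)$ (with $\sigma_f^2 > 0$ finite, finiteness being guaranteed by \eqref{majoration-1}-type bounds) is determined by its moments.
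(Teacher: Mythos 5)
Your high-level plan — express moments as ordered simplex integrals, identify the leading pairwise structure, prove the moment estimates by induction on $k$, and conclude by the method of moments — matches the paper. But the specific mechanism you sketch for the error control has a genuine gap. You propose to peel from the right at $\cF_{s_{m-1}}$, replace $f(X_{s_m})$ by $Q_{s_m-s_{m-1}}f(X_{s_{m-1}})$ with the bound $C\psi(x)e^{-\gamma(s_m-s_{m-1})}$, and then ``iterate'' so that a product of $m$ evaluations is bounded by $e^{-\gamma}$ in all consecutive gaps times ``powers of $\psi$ evaluated along the path''. This cannot be iterated: once the factor $\psi(X_{s_{m-1}})$ appears, the function being propagated by $Q_r$ lies in $\L^\infty(\psi)$ but has nonzero $\beta$-mean, so its $Q_r$-image carries no further exponential decay. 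You therefore recover only one $e^{-\gamma}$ factor this way, not $k$ of them. Moreover, products $\psi(X_{s_{i_1}})\psi(X_{s_{i_2}})\cdots$ at several times are not controlled by $\E_\mu^\Q(\psi(X_s))\le\mu(\psi)+\beta(\psi)$; that controls first moments only, and Assumption~\ref{expo-ergodicity} gives no moment information on $\psi^2,\psi^3,\ldots$. The same $\psi$-moment obstruction arises in the cluster-expansion account you give of the subleading configurations.

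The paper's proof is organised differently precisely to avoid both issues. The key Lemma with \eqref{recurrence} groups the ordered integral into consecutive pairs $\bigl[\int_0^{s_2}f(X_{s_1})ds_1\bigr]f(X_{s_2})\cdots\bigl[\int_{s_{2k-2}}^{s_{2k}}f(X_{s_{2k-1}})ds_{2k-1}\bigr]f(X_{s_{2k}})$, conditions at $\cF_{s_2}$ (the \emph{left} end, not the right), and uses the inductive hypothesis to control the \emph{inner} expectation's deviation from $\sigma_f^{2(k-1)}/2^{k-1}$ in $\L^\infty(\psi)$-norm; the uniform estimate \eqref{controle-3},
$\bigl|\E_\mu^\Q(\int_0^{s_2}f\,ds\,f(X_{s_2})h(X_{s_2}))\bigr|\le\frac{C}{\gamma}(1+\beta(\psi)/c)\,\mu(\psi)\|h\|_{\L^\infty(\psi)}$,
then converts this into a bound with a single $\mu(\psi)$ factor. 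The $k$ exponential-decay factors in the gaps $s_{2(i+1)}-s_{2i}$ arrive one per level of the induction (the base case $k=1$ supplies the new $i=0$ gap; the inner inductive bound carries $i=1,\ldots,k-1$), rather than all at once from an iterated right-to-left peeling. A second small lemma, \eqref{appendix}, then integrates the error term over the simplex to get the $O(t^{k-1})$ bound, after which the odd moments and the method of moments proceed as you describe. If you want to repair your argument, the essential fix is exactly this organisation: never carry more than one $\psi$-factor, and let the induction hypothesis (stated in $\L^\infty(\psi)$-norm) feed the pair-by-pair exponential decay.
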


Before proving Theorem \ref{moment-ordre-pair}, we need to prove two lemmata.

\begin{lemma} There exists a sequence of positive constants $(D_k)_{k \in \N}$ such that,
for all $f \in \cB_1(E')$ such that $\beta(f) = 0$ and $\sigma_f^2 > 0$, for all $k \in \N$, $\mu \in \cM_1(E')$ and $s_2 \leq \ldots \leq s_{2k}$, 
\begin{multline}\label{recurrence}\left|\E_\mu^\Q\left(\left[\int_0^{s_2}f(X_{s_1})ds_1\right] f(X_{s_2}) \ldots \left[\int_{s_{2k-2}}^{s_{2k}} f(X_{s_{2k-1}})ds_{2k-1}\right]f(X_{s_{2k}})\right) - \frac{\sigma_f^{2k}}{2^k}\right| \\ \leq D_k \mu(\psi) \sum_{i=0}^{k-1} (s_{2(i+1)}-s_{2i}+1)e^{-\gamma (s_{2(i+1)} - s_{2i})},\end{multline}
where $s_0 = 0$ by convention. \end{lemma}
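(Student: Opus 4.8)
The plan is to prove the inequality by induction on $k$, peeling off the last "block" $\left[\int_{s_{2k-2}}^{s_{2k}} f(X_{s_{2k-1}})ds_{2k-1}\right]f(X_{s_{2k}})$ using the Markov property and the ergodicity estimate of Assumption \ref{expo-ergodicity}. The base case $k=1$ amounts to controlling $\E_\mu^\Q\left(\left[\int_0^{s_2} f(X_{s_1})ds_1\right] f(X_{s_2})\right)$ against $\sigma_f^2/2$; by conditioning on $\cF_{s_1}$ inside the integral and using stationarity plus the mixing bound $|\Cov_\beta^\Q(f(X_0),f(X_r))| \leq C\beta(\psi) e^{-\gamma r}$ together with the definition $\sigma_f^2 = 2\int_0^\infty \Cov_\beta^\Q(f(X_0),f(X_r))dr$, one gets an error of the form $D_1\mu(\psi)(s_2+1)e^{-\gamma s_2}$ after also accounting for the transient term coming from starting at $\mu$ rather than $\beta$ (this is where $\|\Q_x(X_t\in\cdot)-\beta\|_\psi \leq Ce^{-\gamma t}\psi(x)$ enters, and where the factor $(s_2+1)$ rather than a pure exponential shows up, since integrating $r e^{-\gamma r}$-type tails against the length of the interval produces the linear prefactor).

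For the induction step, I would condition on $\cF_{s_{2k-2}}$: the first $k-1$ blocks are $\cF_{s_{2k-2}}$-measurable and bounded (the integrals are bounded by the lengths $s_{2i}-s_{2i-2}$, but crucially we only need the product of the first $k-1$ blocks to have expectation close to $\sigma_f^{2(k-1)}/2^{k-1}$ by the inductive hypothesis, with a controlled remainder), while the last block, by the Markov property, equals $\E_{X_{s_{2k-2}}}^\Q\left(\left[\int_0^{\delta} f(X_{r})dr\right] f(X_{\delta})\right)$ with $\delta = s_{2k}-s_{2k-2}$. Applying the $k=1$ analysis to this conditional expectation (started from the law of $X_{s_{2k-2}}$, whose $\psi$-moment is controlled uniformly by $\beta(\psi)$ up to the ergodicity constant once we are past the initial transient — here the hypothesis $\psi \geq c > 0$ is used to bound $1$ by $\psi/c$) yields that the last block contributes $\sigma_f^2/2$ up to an error $D_1 \beta(\psi)(\delta+1)e^{-\gamma\delta}$ plus a term involving $\mu(\psi)$ from propagating the initial transient through to time $s_{2k-2}$. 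Multiplying out, collecting the cross terms between the "error from the first $k-1$ blocks" and the "error from the last block", and bounding each product term by $D_k\mu(\psi)\sum_{i=0}^{k-1}(s_{2(i+1)}-s_{2i}+1)e^{-\gamma(s_{2(i+1)}-s_{2i})}$ gives the claimed bound; tracking the constants carefully produces the explicit recursion $D_k = \left(\frac{C}{\gamma}(1+\beta(\psi)/c)\right) D_{k-1}$ (essentially), which unwinds to the stated formula for $D_k$.

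The main obstacle I anticipate is the bookkeeping of how the "transient" error — the discrepancy between starting the sub-chain at $\mathrm{Law}(X_{s_{2k-2}})$ versus at $\beta$ — is controlled by $\mu(\psi)$ uniformly in the intermediate times, and how this interacts multiplicatively with the ergodic errors from the remaining blocks without losing the single overall factor $\mu(\psi)$ (rather than powers of it). The key point making this work is that $\E_\mu^\Q(\psi(X_t)) \leq \beta(\psi) + Ce^{-\gamma t}\mu(\psi) \lesssim \mu(\psi)$ when $\psi \geq c$ and $\mu(\psi) \geq c$, so each conditioning step replaces a needed $\psi$-moment by something bounded by a constant times $\mu(\psi)$, and since only one "outermost" conditioning on $\mu$ is genuinely needed (the inner blocks can be handled by $\beta(\psi)$-moments after the transient is absorbed), the final bound retains a single power of $\mu(\psi)$. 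A secondary technical point is justifying the interchange of conditional expectation and the time integrals (Fubini, using boundedness of $f$ and finiteness of the time horizon), which is routine but should be mentioned.
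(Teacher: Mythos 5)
Your plan reverses the direction of the paper's induction: you peel off the \emph{last} block (conditioning on $\cF_{s_{2k-2}}$), while the paper peels off the \emph{first} block (conditioning on $\cF_{s_2}$). This is not a cosmetic difference, and it creates a genuine gap that your sketch does not address.

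Write $A$ for the product of blocks $1,\dots,k-1$ and $B$ for the last block. After conditioning and applying the base case to $B$, the quantity to control is
\[
\E_\mu^\Q[AB] - \tfrac{\sigma_f^{2k}}{2^k}
= \tfrac{\sigma_f^2}{2}\Bigl(\E_\mu^\Q[A]-\tfrac{\sigma_f^{2(k-1)}}{2^{k-1}}\Bigr) + \E_\mu^\Q\bigl[A\,r(X_{s_{2k-2}})\bigr],
\]
where $r(x)=\E_x^\Q[\text{last block}]-\sigma_f^2/2$ satisfies $|r(x)|\le D_1\psi(x)(\delta+1)e^{-\gamma\delta}$, $\delta=s_{2k}-s_{2k-2}$. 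The first term is handled by the inductive hypothesis. The second term is the problem: it is \emph{not} a ``cross term of errors'' as your sketch suggests. The inductive hypothesis controls $\E_\mu^\Q[A]-\sigma_f^{2(k-1)}/2^{k-1}$, not $\E_\mu^\Q[|A|\,\psi(X_{s_{2k-2}})]$, and the latter is what the crude bound requires. In fact $\E_\mu^\Q[|A|\,\psi(X_{s_{2k-2}})]$ typically grows with the interval lengths (already for $k=2$ it is of order $\sqrt{s_2}$), so bounding $|\E_\mu^\Q[A\,r(X_{s_{2k-2}})]|$ by $D_1(\delta+1)e^{-\gamma\delta}\,\E_\mu^\Q[|A|\,\psi(X_{s_{2k-2}})]$ does \emph{not} produce the required sum of exponentially decaying terms, one per interval. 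Your sketch does not say how this term is to be bounded; the remarks about ``propagating the transient'' and ``$\psi\ge c$'' concern a different and easier point ($\psi$-moments of $X_t$ are uniformly bounded), not this correlation issue.

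The paper avoids the problem by peeling from the front and proving a separate, uniform-in-time estimate (inequality \eqref{controle-3}): for all $t\ge 0$, $\mu$, $f\in\cB_1(E')$ with $\beta(f)=0$ and $h\in\L^\infty(\psi)$,
\[
\Bigl|\E_\mu^\Q\Bigl(\int_0^{t} f(X_s)\,ds\; f(X_{t})\,h(X_{t})\Bigr)\Bigr| \le \frac{C}{\gamma}\Bigl(1+\frac{\beta(\psi)}{c}\Bigr)\,\mu(\psi)\,\|h\|_{\L^\infty(\psi)}.
\]
Taking $h(x)=\E_x^\Q[\text{blocks } 2,\dots,k]-\sigma_f^{2(k-1)}/2^{k-1}$, whose $\L^\infty(\psi)$-norm is controlled by the inductive hypothesis, immediately gives the cross term the right form, and the recursion $D_k = D_{k-1}\cdot\frac{C}{\gamma}(1+\beta(\psi)/c)$ drops out. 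This auxiliary estimate is the real engine of the proof and is absent from your plan. If you insist on peeling the last block, you would have to iterate this estimate $k-1$ times inside the induction step (to push $r(X_{s_{2k-2}})$ back through all the earlier blocks), or strengthen the inductive hypothesis to also bound $\E_\mu^\Q[A\,h(X_{s_{2k-2}})]$ for arbitrary $h\in\L^\infty(\psi)$; either way, \eqref{controle-3} (or an analogue) must appear explicitly. Finally, note that the recursion you state for $D_k$ matches the paper's answer but does not follow from the route you describe, which would instead produce a factor $\sigma_f^2/2$ in place of $\frac{C}{\gamma}(1+\beta(\psi)/c)$ at each step.

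Your base case $k=1$ is in the right spirit and essentially matches the paper (compare the integral $\int_0^t\E_{\mu Q_{t-s}}^\Q(f(X_0)f(X_s))\,ds$ against $\int_0^\infty\E_\beta^\Q(f(X_0)f(X_s))\,ds$ using the mixing bound and the tail estimate), so that part is fine.
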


\begin{proof}
We prove it by induction on $k$. We begin by showing the case $k = 1$. For all $\mu \in \cM_1(E')$ and $f \in \cB_1(E')$ and $t \geq 0$,
\begin{align}\E^\Q_{\mu}\left(\left[\int_0^tf(X_s)ds\right]f(X_t)\right) &= \int_0^t \E_{\mu}^\Q(f(X_s)f(X_t))ds \notag \\
&= \int_0^t \E_{\mu Q_{t-s}}^\Q(f(X_0)f(X_{s}))ds, \label{integarle}\end{align}
where we denote by $(Q_t)_{t \geq 0}$ the semigroup for the process $(X_t)_{t \geq 0}$.
By Assumption \ref{expo-ergodicity}, for all $x \in E'$ and $f \in \cB_1(E')$ such that $\beta(f) = 0$,
\begin{equation}\label{controle}|\E_x^\Q(f(X_0)f(X_s))| \leq  \E_x^\Q(|\E_{X_0}^\Q(f(X_s))|) \leq \frac{C}{c} \psi(x) e^{-\gamma s}.\end{equation}
Hence, by \eqref{integarle}, Assumption \ref{expo-ergodicity} and this last inequality, for all $t \geq 0$, $\mu \in \cM_1(E')$ such that $\mu(\psi)<+\infty$ and $f \in \cB_1(E')$ such that $\beta(f) = 0$, 
\begin{align}\left|\E^\Q_{\mu}\left(\left[\int_0^t f(X_s)ds\right]f(X_t)\right) - \int_0^t \E_{\beta}^\Q(f(X_0)f(X_{s}))ds\right| &\leq C \mu(\psi) \int_0^t e^{-\gamma (t-s)} \|\E_\cdot^\Q(f(X_0)f(X_s))\|_{\L^\infty(\psi)}ds \notag \\
&\leq C \mu(\psi) \int_0^t e^{-\gamma(t-s)} \frac{C}{c} e^{-\gamma s}ds \notag \\
&\leq \frac{C^2}{c} \mu(\psi) t e^{-\gamma t}.\label{control}\end{align}
Moreover, since $\beta(f) = 0$, by \eqref{controle}, for all $t \geq 0$,
\begin{equation}\left|\int_t^\infty \E_\beta^\Q(f(X_0)f(X_s))ds\right| \leq \int_t^\infty \frac{C}{c} \beta(\psi) e^{-\gamma s}ds \leq \frac{C \beta(\psi)}{c\gamma} e^{- \gamma t} \leq \frac{C \beta(\psi)}{c^2\gamma} \mu(\psi) e^{- \gamma t}. \label{control-2}\end{equation}
Hence, by definition of $\sigma_f^2$, there exists $D_1 > 0$ such that
$$\left|\E^\Q_{\mu}\left(\left[\int_0^t f(X_s)ds\right]f(X_t)\right) - \frac{\sigma_f^2}{2}\right| \leq D_1 \mu(\psi) (t+1)e^{-\gamma t}.$$
In particular, one can choose here $D_1 := \frac{C^2}{c} \lor \frac{C \beta(\psi)}{c^2\gamma}$. 
This concludes the base case. 

Let $k-1 \in \N$ be such that the hypothesis of induction is satisfied. Then, by the Markov property, 
\begin{multline}
\E_\mu^\Q\left(\int_0^{s_2}f(X_{s_1})ds_1 f(X_{s_2}) \ldots \int_{s_{2k-2}}^{s_{2k}} f(X_{s_{2k-1}})ds_{2k-1}f(X_{s_{2k}})\right)\\ = \E_\mu^\Q\left(\int_0^{s_2}f(X_{s_1})ds_1f(X_{s_2}) \E_{X_{s_2}}^\Q\left(\int_{s_2}^{s_4} f(X_{{s_3}-{s_2}})ds_3f(X_{{s_4}-{s_2}}) \ldots \int_{s_{2k-2}}^{s_{2k}} f(X_{s_{2k-1}-s_2})ds_{2k-1}f(X_{s_{2k}-s_2})\right)\right) \\
= \E_\mu^\Q\left(\int_0^{s_2}f(X_{s_1})ds_1f(X_{s_2}) \E_{X_{s_2}}^\Q\left(\int_{0}^{s_4-s_2} f(X_{{s_3}})ds_3f(X_{{s_4}-{s_2}}) \ldots \int_{s_{2k-2}-s_2}^{s_{2k}-s_2} f(X_{s_{2k-1}})ds_{2k-1}f(X_{s_{2k}-s_2})\right)\right). \label{grosse-equation}
\end{multline}
By hypothesis, for all $s_2 \leq s_4 \ldots \leq s_{2k}$,
\begin{multline}\left| \E_{X_{s_2}}^\Q\left(\int_{0}^{s_4-s_2} f(X_{{s_3}-{s_2}})ds_3f(X_{{s_4}-{s_2}}) \ldots \int_{s_{2k-2}-s_2}^{s_{2k}-s_2} f(X_{s_{2k-1}})ds_{2k-1}f(X_{s_{2k}-s_2})\right) - \frac{\sigma_f^{2k-2}}{2^{k-1}}\right| \\ \leq D_{k-1} \psi(X_{s_2}) \sum_{i=1}^{k-1} (s_{2(i+1)} - s_{2i} + 1)e^{-\gamma(s_{2(i+1)}-s_{2i})}.\label{bonne-inegalite}\end{multline}

Moreover, since $\beta(f) = 0$, for all $\mu \in \cM_1(E')$, for all $s_2 \geq 0$, for all $h \in \L^\infty(\psi)$,
\begin{equation}\label{controle-3}\left|\E_\mu^\Q\left(\int_0^{s_2} f(X_s)dsf(X_{s_2})h(X_{s_2})\right)\right| \leq \frac{C}{\gamma}(1+\frac{\beta(\psi)}{c}) \mu(\psi) \|h\|_{\L^\infty(\psi)},\end{equation}
where $C$ is the constant implied in Assumption \ref{expo-ergodicity}.
Indeed, for all $t \geq 0$, $\mu \in \cM_1(E')$ and $f,g \in \L^\infty(\psi)$,
$$\int_0^t \E_\mu^\Q\left[f(X_s)g(X_t)\right]ds = \int_0^t \E_\mu^\Q\left[f(X_s)\E_{X_s}(g(X_{t-s}))\right]ds.$$
Thus, by Assumption \ref{expo-ergodicity}, for all $t \geq 0$, $\mu \in \cM_1(E')$, $f \in \cB_1(E')$ and $g \in \L^\infty(\psi)$,
\begin{align*}\left|\int_0^t \E_\mu^\Q\left[f(X_s)g(X_t)\right]ds-\int_0^t \E_\mu^\Q\left[f(X_s)\right]\beta(g)ds\right| &\leq C \|g\|_{\L^\infty(\psi)} \int_0^t \mu(\psi)e^{-\gamma(t-s)}ds \\
&\leq \frac{C}{\gamma} \|g\|_{\L^\infty(\psi)}\mu(\psi).\end{align*}
Moreover, again by Assumption \ref{expo-ergodicity}, for all $t \geq 0$, $\mu \in \cM_1(E')$ and $f \in \cB_1(E')$ such that $\beta(f) = 0$,
$$\left|\int_0^t \E_\mu^\Q\left[f(X_s)\right]ds\right| \leq \frac{C}{c} \mu(\psi) \int_0^t e^{-\gamma s}ds \leq \frac{C}{c\gamma} \mu(\psi).$$
These two last inequalities applied to $g = f \times h$ imply \eqref{controle-3}. 

Now, denote 
$$h: x \mapsto \E_{x}^\Q\left(\int_{0}^{s_4-s_2} f(X_{{s_3}-{s_2}})ds_3f(X_{{s_4}-{s_2}}) \ldots \int_{s_{2k-2}-s_2}^{s_{2k}-s_2} f(X_{s_{2k-1}})ds_{2k-1}f(X_{s_{2k}-s_2})\right) - \frac{\sigma_f^{2k-2}}{2^{k-1}}.$$
Then, by \eqref{bonne-inegalite}, $h \in \L^\infty(\psi)$ and 
$$\|h\|_{\L^\infty(\psi)} \leq D_{k-1} \sum_{i=1}^{k-1} (s_{2(i+1)} - s_{2i} + 1)e^{-\gamma(s_{2(i+1)}-s_{2i})}.$$
Hence, by \eqref{grosse-equation}, \eqref{controle-3} and this last inequality, 
\begin{multline*}\left|\E_\mu^\Q\left(\int_0^{s_2}f(X_{s_1})ds_1 f(X_{s_2}) \ldots \int_{s_{2k-2}}^{s_{2k}} f(X_{2k-1})ds_{2k-1}f(X_{s_{2k}})\right) - \frac{\sigma_f^{2(k-1)}}{2^{k-1}} \E_\mu^\Q\left(\int_0^{s_2}f(X_{s_1})ds_1 f(X_{s_2})\right)\right| \\ \leq \frac{C}{\gamma}(1+\frac{\beta(\psi)}{c})\mu(\psi)\|h\|_{\L^\infty(\psi)} \leq \frac{C}{\gamma}(1+\frac{\beta(\psi)}{c})\mu(\psi)D_{k-1} \sum_{i=1}^{k-1} (s_{2(i+1)} - s_{2i} + 1)e^{-\gamma(s_{2(i+1)}-s_{2i})}. \end{multline*}

This and the case $k=1$ conclude the induction setting 
$$D_k := \left[D_{k-1} \times \left(\frac{C}{\gamma}(1+\frac{\beta(\psi)}{c})\right)\right] \lor D_1.$$
\end{proof}

We need also the following lemma.

\begin{lemma}
 For all $k \in \Z_+$, there exists $C_k \in (0,+\infty)$ such that,  for $t \geq 1$,
\begin{equation}\label{appendix}\int_{0 \leq s_2 \ldots \leq s_{2k} \leq t}  \sum_{i=0}^{k-1} (s_{2(i+1)} - s_{2i} + 1)e^{-\gamma(s_{2(i+1)}-s_{2i})} ds_2 \ldots ds_{2k} \leq C_k t^{k-1}.\end{equation} \end{lemma}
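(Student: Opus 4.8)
The plan is to bound the multidimensional integral over the ordered simplex $\{0 \leq s_2 \leq \cdots \leq s_{2k} \leq t\}$ by distributing the sum and treating each term separately. Fix an index $i \in \{0,\ldots,k-1\}$ and consider the contribution of $(s_{2(i+1)} - s_{2i} + 1)e^{-\gamma(s_{2(i+1)}-s_{2i})}$. The key observation is that this factor depends only on the \emph{gap} $u_{i+1} := s_{2(i+1)} - s_{2i}$. So I would change variables from $(s_2, s_4, \ldots, s_{2k})$ to the gaps $(u_1, \ldots, u_k)$ where $u_j := s_{2j} - s_{2(j-1)}$ (recall $s_0 = 0$), which is a volume-preserving linear bijection from the simplex onto $\{u_j \geq 0, \sum_{j=1}^k u_j \leq t\}$.

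After this change of variables, the $i$-th term becomes $\int (u_{i+1}+1)e^{-\gamma u_{i+1}} \, du_1 \cdots du_k$ over $\{u_j \geq 0, \sum u_j \leq t\}$. I would then bound crudely: integrate $u_{i+1}$ over $[0,\infty)$, which gives a finite constant $\int_0^\infty (u+1)e^{-\gamma u}\,du = \tfrac{1}{\gamma} + \tfrac{1}{\gamma^2} =: \kappa$; and bound the remaining $k-1$ variables by the volume of the simplex $\{u_j \geq 0, \sum_{j \neq i+1} u_j \leq t\}$ in $\R^{k-1}$, which is $t^{k-1}/(k-1)!$. This yields a bound of $\kappa \, t^{k-1}/(k-1)!$ for each of the $k$ terms, hence $C_k t^{k-1}$ with $C_k := k\kappa/(k-1)!$ (and one absorbs the $t \geq 1$ hypothesis to handle any lower-order terms cleanly, though here it is not even needed since the bound is exact).

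Concretely, the steps in order are: (1) write the integral of the sum as a sum of $k$ integrals; (2) perform the gap change of variables, noting the Jacobian is $1$ and the image domain is the corner simplex $\Delta_t^k := \{u \in \R_+^k : \sum u_j \leq t\}$; (3) for the $i$-th summand, enlarge the domain by replacing the constraint on $u_{i+1}$ by $u_{i+1} \in [0,\infty)$ and keeping $\sum_{j \neq i+1} u_j \leq t$, then use Fubini to factor the integral; (4) evaluate $\int_0^\infty (u+1)e^{-\gamma u}du < \infty$ and bound $\mathrm{Vol}(\Delta_t^{k-1}) = t^{k-1}/(k-1)!$; (5) sum over $i$ and collect constants.

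I do not expect any genuine obstacle here — this is a routine volume estimate. The only mild care needed is setting up the gap variables correctly (keeping track that $s_0 = 0$ so that $u_1 = s_2$, and that the $i$-th summand in the statement involves the gap $s_{2(i+1)} - s_{2i}$, i.e. exactly $u_{i+1}$) and making sure the enlargement of the integration domain in step (3) is an over-estimate, which it clearly is since we only drop an upper constraint and relax another. The constant $C_k$ will grow like $k/(k-1)!$, which is exactly what feeds into the $D_k C_1 \tfrac{k}{(k-1)!}$ factor appearing in Theorem \ref{moment-ordre-pair}.
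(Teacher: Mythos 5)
Your argument is correct, and it takes a genuinely different route from the paper's. The paper proceeds by induction on $k$: the $k=1$ case is computed directly, and for the inductive step the sum $\sum_{i=0}^{k-1}$ is split into $\sum_{i=0}^{k-2}$ (handled by applying the induction hypothesis with $s_{2k}$ frozen, then integrating $\int_0^t C_{k-1}s_{2k}^{k-2}\,ds_{2k}$) plus the $i=k-1$ term (handled by computing the inner simplex volume $s_{2(k-1)}^{k-2}/(k-2)!$ explicitly and reducing to the $k=1$ estimate). This yields the recursion $C_k = \tfrac{C_{k-1}}{k-1} + \tfrac{C_1}{(k-1)!}$, which solves to $C_k = \tfrac{k\,C_1}{(k-1)!}$. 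Your direct approach — passing to gap variables $u_j = s_{2j}-s_{2(j-1)}$ (a unit-Jacobian triangular change of variables mapping the ordered simplex onto $\{u\in\R_+^k:\sum u_j\le t\}$), enlarging the domain so $u_{i+1}$ ranges over all of $[0,\infty)$ while the other gaps keep $\sum_{j\ne i+1}u_j\le t$, then factoring by Fubini — gives each summand the bound $\kappa\,t^{k-1}/(k-1)!$ with $\kappa=\tfrac1\gamma+\tfrac1{\gamma^2}$, hence $C_k = k\kappa/(k-1)!$, which is exactly the same constant the paper's recursion produces once one identifies $C_1=\kappa$. Your version is more transparent and treats all $i$ symmetrically in one pass, and it avoids a small technical wrinkle in the paper's inductive step, namely applying the induction hypothesis at $s_{2k}<1$ where the stated bound is only asserted for $t\ge 1$ (harmless there, since the corresponding integral is bounded, but your argument simply doesn't encounter it).
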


\begin{proof}
We prove \eqref{appendix} by induction on $k$. The case $k=1$ can easily be obtained by the reader for a given constant $C_1 > 0$. Now, assume that \eqref{appendix} holds true for $k-1 \in \N$. For all $t \geq 0$,
\begin{multline}
\sum_{i=0}^{k-1} \int_{0 \leq s_2 \leq \ldots \leq s_{2k} \leq t} (s_{2(i+1)} - s_{2i} + 1)e^{-\gamma (s_{2(i+1)}-s_{2i})}ds_2 \ldots ds_{2k}  \\
= \int_{0 \leq s_2 \leq \ldots \leq s_{2k} \leq t} \sum_{i=0}^{k-2} (s_{2(i+1)} - s_{2i}+1)e^{-\gamma (s_{2(i+1)} - s_{2i})}ds_2 \ldots ds_{2k}  \\
+ \int_{0 \leq s_2 \leq \ldots \leq s_{2k} \leq t} (s_{2k}-s_{2(k-1)}+1)e^{-\gamma (s_{2k}-s_{2(k-1)})}ds_2 \ldots ds_{2k} \\
= \int_0^t \left[\int_{0 \leq s_2 \leq \ldots \leq s_{2(k-1)} \leq s_{2k}} \sum_{i=0}^{k-2} (s_{2(i+1)}-s_{2i}+1)e^{-\gamma(s_{2(i+1)}-s_{2i})}ds_2 \ldots ds_{2(i-1)}\right]ds_{2k} \\
+ \int_{0 \leq s_2 \leq \ldots \leq s_{2k} \leq t} (s_{2k}-s_{2(k-1)}+1)e^{-\gamma (s_{2k} - s_{2(k-1)})}ds_2 \ldots ds_{2k}. \label{deuxieme-lemma}
\end{multline} 
By hypothesis, for all $t \geq 0$,
\begin{multline*}\int_0^t \left[\int_{0 \leq s_2 \leq \ldots \leq s_{2(k-1)} \leq s_{2k}} \sum_{i=0}^{k-2} (s_{2(i+1)}-s_{2i})e^{-\gamma(s_{2(i+1)}-s_{2i})}ds_2 \ldots ds_{2(i-1)}\right]ds_{2k} \\ \leq \int_0^t C_{k-1} s_{2k}^{k-2}ds_{2k} = C_{k-1} \frac{t^{k-1}}{k-1}.\end{multline*}
For all $t \geq 0$, the second term of \eqref{deuxieme-lemma} is equal to
\begin{align*}\int_{0 \leq s_{2(k-1)} \leq s_{2k} \leq t} (s_{2k}-s_{2(k-1)}+1)&e^{-\gamma (s_{2k}-s_{2(k-1)})} \left[\int_{0 \leq s_2 \leq \ldots \leq s_{2k-1}} ds_2 \ldots ds_{2(k-2)}\right]ds_{2(k-1)}ds_{2k} \\&= \int_{0 \leq r \leq s \leq t} (s-r+1)e^{-\gamma(s-r)}\frac{r^{k-2}}{(k-2)!}drds \\
&= \int_0^t \left[\int_r^t (s-r+1)e^{-\gamma(s-r)}ds\right]\frac{r^{k-2}}{(k-2)!}dr \\
&=\int_0^t\left( \int_0^{t-r} (u+1)e^{-\gamma u} du\right)\frac{r^{k-2}}{(k-2)!}dr \leq \frac{C_1}{(k-1)!} t^{k-1},\end{align*}
where $C_1 < + \infty$ is exactly the same constant as for the case $k=1$.  
Hence, \eqref{appendix} is proved with $C_k$ satisfying the relation $C_k = \frac{C_{k-1}}{k-1} + \frac{C_1}{(k-1)!}$. By induction, for all $k \geq 2$,
$$C_k = \frac{C_1}{(k-1)!} + \frac{C_1}{(k-2)!}.$$
\end{proof}

We can now prove Theorem \ref{moment-ordre-pair}.

\begin{proof}[Proof of Theorem \ref{moment-ordre-pair}]
We begin with the convergence of the even moment. For all $\mu \in \cM_1(E')$, $t \geq 0$, $f \in \cB_1(E')$ and $k \in \Z_+$,
\begin{multline}\label{even}\E_\mu^\Q\left(\left(\int_0^t f(X_s)ds\right)^{2k}\right) \\ = (2k)! \int_{0 \leq s_2 \leq \ldots \leq s_{2k} \leq t} \int_0^{s_2} \int_{s_2}^{s_4} \cdots \int_{s_{2k-2}}^{s_{2k}} \E_\mu^\Q(f(X_{s_1})f(X_{s_2}) \ldots f(X_{s_{2k-1}})f(X_{s_{2k}}))ds_1 \ldots ds_{2k}.\end{multline}
Then, assuming moreover that $\beta(f) = 0$, by \eqref{even},\eqref{recurrence} and \eqref{appendix},
\begin{equation}\label{control}\left|\E_\mu^\Q\left(\left(\int_0^t f(X_s)ds\right)^{2k}\right) - \frac{(2k)!}{k!} t^k \frac{\sigma_f^{2k}}{2^k}\right| \leq (2k)! D_k \mu(\psi) \times C_k t^{k-1},\end{equation}
which implies \eqref{even-moment}. Now, for all $\mu \in \cM_1(E')$, $t \geq 0$, $k \in \Z_+$ and $f \in \cB_1(E')$ such that $\beta(f) = 0$, 

\begin{align*}\E_\mu^\Q&\left(\left(\int_0^t f(X_s)ds\right)^{2k+1}\right) \\ &= (2k+1)! \int_0^t \E_\mu^\Q\left(f(X_s)\E_{X_{s}}^\Q\left[\int_{0 \leq s_2 \leq \ldots \leq s_{2k+1} \leq t-s} f(X_{s_2}) \ldots f(X_{s_{2k+1}})ds_2\ldots ds_{2k+1}\right]\right)ds \\
&= (2k+1) \int_0^t \E_\mu^\Q\left(f(X_s) \E_{X_s}^\Q\left(\left(\int_0^{t-s} f(X_u)du\right)^{2k}\right)\right)ds \\
&= (2k+1) \int_0^t \E_\mu^\Q\left(f(X_{t-s}) \E_{X_{t-s}}^\Q\left(\left(\int_0^{s} f(X_u)du\right)^{2k}\right)\right)ds .
\end{align*}
By \eqref{control} and using that $\E_\mu^\Q[\psi(X_{t-s})] \leq (\frac{\beta(\psi)}{c} + C) \mu(\psi)$ for all $\mu \in \cM_1(E')$ and $s \leq t$ (this is a consequence of Assumption \eqref{expo-ergodicity}), there exists $\hat{C} > 0$ such that
\begin{equation}\label{control-2}\left|\E_\mu^\Q\left(\left(\int_0^t f(X_s)ds\right)^{2k+1}\right) - \frac{(2k+1)!}{k!} \frac{\sigma_f^{2k}}{2^k} \int_0^t s^k \E_\mu^\Q\left(f(X_{t-s})\right)ds\right| \leq D_k \hat{C} \frac{k(2k+1)}{(k-1)!} \mu(\psi) \frac{t^k}{k}.\end{equation}
Since $\beta(f) = 0$, by Assumption \ref{expo-ergodicity}, for all $\mu \in \cM_1(E')$ and $s \leq t$,
\begin{equation}\label{encore-ergodicity}|\E_\mu^\Q[f(X_{t-s})]| \leq C \mu(\psi) e^{-\gamma (t-s)}.\end{equation}
For all $t > 0$ and $k \in \Z_+$,
\begin{equation}\label{inte}\frac{1}{t^{k+\frac{1}{2}}} \int_0^t s^k e^{-\gamma (t-s)}ds = \frac{e^{-\gamma t}}{t^{k + 1/2}} \int_0^t s^k e^{\gamma s} ds \leq \frac{e^{-\gamma t}}{\sqrt{t}} \int_0^t e^{\gamma s}ds \leq \frac{1}{\gamma \sqrt{t}}.\end{equation}
We deduce from \eqref{control-2}, \eqref{encore-ergodicity} and \eqref{inte} that there exists $\hat{C} > 0$ (different from the previous one) such that, for all $\mu \in \cM_1(E')$ such that $\mu(\psi) < + \infty$ and $f \in \cB_1(E')$ such that $\beta(f) = 0$, \begin{equation}\label{control-odd-moment}\left|\frac{1}{t^{k+1/2}} \E_\mu^\Q\left(\left(\int_0^t f(X_s)ds\right)^{2k+1}\right)\right| \leq D_k \times \hat{C} \left[\frac{(2k+1)!}{2^k k!} + \frac{2k+1}{(k-1)!}\right]\frac{\mu(\psi)}{\sqrt{t}}.\end{equation}
The central limit theorem is deduced from the method of moments. Now, concerning a suitable candidate for the sequence $(D_k)_{k \in \N}$, as proven in Lemma 1, a suitable candidate is the sequence defined recursively by
$$D_1 := \frac{C^2}{c} \lor \frac{C \beta(\psi)}{c^2\gamma},$$
$$D_k := \left[D_{k-1} \times \left(\frac{C}{\gamma} (1+\frac{\beta(\psi)}{c})\right)\right] \lor D_1,~~~~\forall k \geq 2,$$
in other words the sequence defined in Theorem \ref{moment-ordre-pair}. 
 \end{proof}

\subsection{A quantitative uniform CLT}
\label{useful-lemma}

The aim of this subsection is to prove the following result, which can be seen as an improved central limit theorem for $(X_t)_{t \geq 0}$.  

\begin{theorem}
\label{convergence-uniforme}
Assume that $(X_t)_{t \geq 0}$ satisfies Assumption \ref{expo-ergodicity}. Then,  for all $\mu \in \cM_1(E')$ such that $\mu(\psi) < \infty$, $f \in \cB_1(E')$ such that $\beta(f) = 0$ and $\sigma_f^2 > 0$, and $\omega \in \R$, 
\begin{equation}\label{uniform-convergence}\lim_{t \to \infty} \sup_{g \in \L^\infty(\psi) : \|g\|_{\L^\infty(\psi)} \leq 1} \left|\E^\Q_\mu\left[e^{\frac{i\omega}{\sqrt{t}}\int_0^t f(X_s)ds}g(X_t)\right] - \beta(g) e^{-\frac{\sigma_f^2 \omega^2}{2}}\right| = 0.\end{equation}
Moreover, for all $t > 0$ and $\mu \in \cM_1(E')$, one has 
$$ \sup_{g \in \L^\infty(\psi) : \|g\|_{\L^\infty(\psi)} \leq 1} \left|\E^\Q_\mu\left[e^{\frac{i\omega}{\sqrt{t}}\int_0^t f(X_s)ds}g(X_t)\right] - \beta(g) \E_\mu^\Q(e^{\frac{i \omega}{\sqrt{t}}\int_0^t f(X_s)ds})\right| \leq C \mu(\psi) e^{-\gamma t} + \frac{C |\omega|}{\sqrt{t}} \frac{\beta(\psi)+C \mu(\psi)}{\gamma}.$$
\end{theorem}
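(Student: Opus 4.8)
The plan is to expand the exponential $e^{\frac{i\omega}{\sqrt t}\int_0^t f(X_s)ds}$ as a power series and control each term using the moment bounds already established in Theorem \ref{moment-ordre-pair}. More precisely, write
$$\E^\Q_\mu\!\left[e^{\frac{i\omega}{\sqrt t}\int_0^t f(X_s)ds}g(X_t)\right]=\sum_{n\ge 0}\frac{(i\omega)^n}{n!\,t^{n/2}}\,\E^\Q_\mu\!\left[\left(\int_0^t f(X_s)ds\right)^n g(X_t)\right].$$
First I would treat the "tagged" moments $\E^\Q_\mu[(\int_0^t f)^n g(X_t)]$ the same way the bare moments were treated in the proof of Theorem \ref{moment-ordre-pair}: by the multinomial/ordering trick the $n$-th such quantity equals $n!$ times an iterated time integral of $\E^\Q_\mu(f(X_{s_1})\cdots f(X_{s_n})g(X_t))$ over $0\le s_1\le\cdots\le s_n\le t$. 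Conditioning at the last time $s_n$ via the Markov property, one gets a factor $\E^\Q_{X_{s_n}}(g(X_{t-s_n}))$, which by Assumption \ref{expo-ergodicity} is $\beta(g)+O(e^{-\gamma(t-s_n)}\psi(X_{s_n}))$ uniformly in $\|g\|_{\L^\infty(\psi)}\le 1$. The $\beta(g)$ piece reproduces exactly the computation giving the bare $n$-th moment (hence, after summing the series, $\beta(g)\,\E_\mu^\Q(e^{\frac{i\omega}{\sqrt t}\int_0^t f})$, which converges to $\beta(g)e^{-\sigma_f^2\omega^2/2}$ by Theorem \ref{moment-ordre-pair} and the method of moments applied to the characteristic function); the error piece is what produces the two correction terms.

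Second I would bound the error piece. For a single term, the contribution of the remainder $r(x):=\E^\Q_x(g(X_{t-s_n}))-\beta(g)$, which satisfies $\|r\|_{\L^\infty(\psi)}\le Ce^{-\gamma(t-s_n)}$, is controlled by the same device used for \eqref{controle-3}: integrating $f$ against a bounded-by-$\psi$ function one loses only a factor $\frac{C}{\gamma}(1+\frac{\beta(\psi)}{c})\mu(\psi)$ per block, while the innermost $(n-1)$-fold integral of $|f(X_{s_1})\cdots f(X_{s_{n-1}})|$ against $r(X_{s_n})$ contributes at most $C\mu(\psi)e^{-\gamma(t-s_n)}$ times the remaining simplex volume $s_n^{n-1}/(n-1)!$. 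Doing the $s_n$-integral $\int_0^t \frac{s_n^{n-1}}{(n-1)!}e^{-\gamma(t-s_n)}ds_n\le \frac{1}{\gamma}\frac{t^{n-1}}{(n-1)!}$, dividing by $n!\,t^{n/2}$ and summing over $n$ gives a geometric-type series in $|\omega|$ whose sum is $O\!\big(\frac{|\omega|}{\sqrt t}\cdot\frac{\beta(\psi)+C\mu(\psi)}{\gamma}\big)$; the $n=0$ term contributes the separate $C\mu(\psi)e^{-\gamma t}$ (this is just $\|\Q_\mu(X_t\in\cdot)-\beta\|_\psi$). Combining, one obtains the stated non-asymptotic bound
$$\sup_{\|g\|_{\L^\infty(\psi)}\le1}\Big|\E^\Q_\mu[e^{\frac{i\omega}{\sqrt t}\int_0^t f}g(X_t)]-\beta(g)\E_\mu^\Q(e^{\frac{i\omega}{\sqrt t}\int_0^t f})\Big|\le C\mu(\psi)e^{-\gamma t}+\frac{C|\omega|}{\sqrt t}\frac{\beta(\psi)+C\mu(\psi)}{\gamma}.$$

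Finally, \eqref{uniform-convergence} follows by letting $t\to\infty$: the right-hand side of the non-asymptotic bound vanishes, and $\E_\mu^\Q(e^{\frac{i\omega}{\sqrt t}\int_0^t f(X_s)ds})\to e^{-\sigma_f^2\omega^2/2}$ by the central limit theorem for $(X_t)$ proved in Theorem \ref{moment-ordre-pair} (convergence in distribution plus boundedness of $e^{i\omega\cdot}$ gives convergence of characteristic functions), so $\beta(g)\E_\mu^\Q(e^{\frac{i\omega}{\sqrt t}\int_0^t f})\to\beta(g)e^{-\sigma_f^2\omega^2/2}$ uniformly in $g$ with $\|g\|_{\L^\infty(\psi)}\le 1$ since $|\beta(g)|\le\beta(\psi)$. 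The main obstacle I anticipate is the bookkeeping in the error estimate: one must check that splitting off the last time variable $s_n$ and the remainder $r(X_{s_n})$, then crudely bounding the remaining $(n-1)$-fold integrand by the constant function times $\psi(X_{s_n})$, does not cost more than one factor of $\frac{C}{\gamma}(1+\beta(\psi)/c)$ per step, so that after division by $n!\,t^{n/2}$ the series in $n$ still converges (uniformly in $t\ge 1$) and is $O(1/\sqrt t)$ — i.e. that the combinatorial factors from the moment expansion are tamed by the $1/n!$ and the simplex volumes exactly as in the two lemmata preceding Theorem \ref{moment-ordre-pair}.
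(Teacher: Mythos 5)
Your overall strategy — expand $e^{\frac{i\omega}{\sqrt t}\int_0^t f}$ as a power series, condition at the last time via the Markov property, split off the $\beta(g)$ piece plus a remainder $r(X_{s})=\E^\Q_{X_s}(g(X_{t-s}))-\beta(g)$ with $\|r\|_{\L^\infty(\psi)}\le Ce^{-\gamma(t-s)}$, and use the $n=0$ term to produce the $C\mu(\psi)e^{-\gamma t}$ correction — matches the paper. But the estimate you propose for the remainder piece does not close. Bounding the innermost $(n-1)$-fold time integral of $|f(X_{s_1})\cdots f(X_{s_{n-1}})|$ by the simplex volume $s_n^{n-1}/(n-1)!$, doing the $s_n$-integral against $e^{-\gamma(t-s_n)}$, and then dividing by $n!\,t^{n/2}$ leaves a contribution of order
\begin{equation*}
\frac{|\omega|^n}{t^{n/2}(n-1)!}\cdot\frac{t^{n-1}}{\gamma}\cdot C(\beta(\psi)+C\mu(\psi))
\;=\;\frac{C(\beta(\psi)+C\mu(\psi))}{\gamma}\cdot\frac{|\omega|^n\,t^{n/2-1}}{(n-1)!}
\end{equation*}
from the $n$-th term, and the series $\sum_{n\ge1}\frac{|\omega|^n\,t^{n/2-1}}{(n-1)!}=\frac{|\omega|}{\sqrt t}\,e^{|\omega|\sqrt t}$ grows exponentially in $\sqrt t$ rather than being $O(1/\sqrt t)$. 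The $1/n!$ does not tame the simplex volume once you divide by $t^{n/2}$ rather than $t^n$, so the ``geometric-type series'' you invoke is in fact divergent for the purposes of the stated bound.

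The step you are missing is a resummation \emph{before} any absolute-value bound is taken. Using $\frac{d}{ds}(\int_0^s f)^k=k(\int_0^s f)^{k-1}f(X_s)$, the paper writes the $k$-th moment error as $k\int_0^t\E_\mu^\Q\bigl[(\int_0^s f)^{k-1}f(X_s)\,r(X_s)\bigr]ds$ and then sums the power series \emph{in $k$ inside the time integral}, reconstituting the exponential:
\begin{equation*}
\E_\mu^\Q\bigl(e^{\frac{i\omega}{\sqrt t}\int_0^t f}g(X_t)\bigr)-\beta(g)\,\E_\mu^\Q\bigl(e^{\frac{i\omega}{\sqrt t}\int_0^t f}\bigr)
=\E_\mu^\Q(g(X_t))-\beta(g)+\frac{i\omega}{\sqrt t}\int_0^t\E_\mu^\Q\Bigl[e^{\frac{i\omega}{\sqrt t}\int_0^s f}f(X_s)\,r(X_s)\Bigr]ds.
\end{equation*}
Only now does one take absolute values: $|e^{\frac{i\omega}{\sqrt t}\int_0^s f}|=1$ and $|f|\le1$ leave $\frac{|\omega|}{\sqrt t}\int_0^t C e^{-\gamma(t-s)}\E_\mu^\Q(\psi(X_s))\,ds\le\frac{C|\omega|}{\sqrt t}\frac{\beta(\psi)+C\mu(\psi)}{\gamma}$, which is exactly the second stated correction. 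The modulus-$1$ cancellation of the exponential is what replaces your crude simplex bound; without it the argument fails. (Your treatment of the $\beta(g)$ piece, the $n=0$ term, and the passage to \eqref{uniform-convergence} via Theorem \ref{moment-ordre-pair} and $|\beta(g)|\le\beta(\psi)$ are all fine.)
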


\begin{proof}[Proof of Theorem \ref{convergence-uniforme}.] 

For all $\mu \in \cM_1(E')$, $f \in \cB(E')$ such that $\beta(f) = 0$, $t \geq 0$, $k \in \Z_+$ and $g \in \L^\infty(\psi)$,
\begin{align*}
\E_\mu^\Q\left(\left(\int_0^t f(X_s)ds\right)^kg(X_t)\right) &= k \int_0^t \E_\mu^\Q\left(\left[\int_0^s f(X_u)du\right]^{k-1}f(X_s)g(X_t)\right)ds \\
&= k \int_0^t \E_\mu^\Q\left(\left[\int_0^s f(X_u)du\right]^{k-1}f(X_s)\E^\Q_{X_s}(g(X_{t-s}))\right)ds.
\end{align*}
Hence, for all $\mu \in \cM_1(E')$, $t \geq 0$, $f \in \cB_1(E')$ such that $\beta(f) = 0$, $k \in \Z_+$ and $g \in \L^\infty(\psi)$,
\begin{multline*}\E_\mu^\Q\left(\left(\int_0^t f(X_s)ds\right)^kg(X_t)\right) - \beta(g) \E_\mu^\Q\left(\left(\int_0^t f(X_s)ds\right)^k\right) \\= k \int_0^t \E_\mu^\Q\left(\left(\int_0^s f(X_u)du\right)^{k-1}f(X_s) [\E_{X_s}^\Q(g(X_{t-s}))-\beta(g)]\right)ds.\end{multline*}
Thus, using that $e^{\frac{i \omega}{\sqrt{t}} \int_0^t f(X_s)ds} = \sum_{k=0}^\infty \frac{i^k \omega^k}{t^{k/2}k!} (\int_0^t f(X_s)ds)^k$ for all $t \geq 0$, $\omega \in \R$, and $f \in \cB_1(E')$ such that $\beta(f) = 0$, then, using the above equality, for all $\mu \in \cM_1(E')$ and $g \in \L^\infty(\psi)$,
\begin{multline*}
\E_\mu^\Q\left(e^{\frac{i\omega}{\sqrt{t}} \int_0^t f(X_s)ds}g(X_t)\right) - \beta(g) \E_\mu^\Q\left(e^{\frac{i\omega}{\sqrt{t}} \int_0^t f(X_s)ds}\right) \\ = \sum_{k=0}^\infty \left(\frac{i \omega}{\sqrt{t}}\right)^k \frac{1}{k!} \left\{\E_\mu^\Q\left(\left(\int_0^t f(X_s)ds\right)^kg(X_t)\right) - \beta(g) \E_\mu^\Q\left(\left(\int_0^t f(X_s)ds\right)^k\right)\right\}\\
= \E_\mu^\Q(g(X_t)) - \beta(g) + \sum_{k=1}^\infty \left(\frac{i \omega}{\sqrt{t}}\right)^k \frac{1}{k!} \left\{\E_\mu^\Q\left(\left(\int_0^t f(X_s)ds\right)^kg(X_t)\right) - \beta(g) \E_\mu^\Q\left(\left(\int_0^t f(X_s)ds\right)^k\right)\right\}\\
= \E_\mu^\Q(g(X_t)) - \beta(g) + \frac{i \omega}{\sqrt{t}} \int_0^t \E_\mu^\Q\left(e^{\frac{i\omega}{\sqrt{s}} \int_0^s f(X_u)du}f(X_s) [\E_{X_s}^\Q(g(X_{t-s}))-\beta(g)]\right)ds.\end{multline*}
By Assumption \ref{expo-ergodicity} one has, for all $\mu \in \cM_1(E')$, $\Q_\mu$-almost surely and for all $s \leq t$ and $g \in \L^\infty(\psi)$,
$$|\E_{X_s}^\Q(g(X_{t-s})) - \beta(g)| \leq C \|g\|_{\L^\infty(\psi)} \psi(X_s) e^{-\gamma (t-s)}.$$
Thus, for all $\mu \in \cM_1(E')$, $t > 0$, $\omega \in \R$, $g \in \L^\infty(\psi)$ and $f \in \cB_1(E')$ such that $\beta(f) = 0$, 
\begin{multline*}\left|\E_\mu^\Q\left(e^{\frac{i\omega}{\sqrt{t}} \int_0^t f(X_s)ds}g(X_t)\right) - \beta(g) \E_\mu^\Q\left(e^{\frac{i\omega}{\sqrt{t}} \int_0^t f(X_s)ds}\right)\right|\\ \leq C \mu(\psi) \|g\|_{\L^\infty(\psi)}e^{-\gamma t} + \frac{C \|g\|_{\L^\infty(\psi)} |\omega|}{\sqrt{t}} \int_0^t  e^{-\gamma (t-s)} \E_\mu^\Q(\psi(X_s))ds.\end{multline*}
Since $|\E_\mu^\Q(\psi(X_s)) - \beta(\psi)| \leq C \mu(\psi) e^{-\gamma s}$ for all $s \geq 0$, one has that 
$$\sup_{t \geq 0}  \int_0^t  e^{-\gamma (t-s)} \E_\mu^\Q(\psi(X_s))ds \leq  \sup_{t \geq 0}  \int_0^t  e^{-\gamma (t-s)} [\beta(\psi) +  C \mu(\psi) e^{-\gamma s}]ds \leq \frac{\beta(\psi) + C \mu(\psi)}{\gamma}.$$
These two last inequalities and Theorem \ref{moment-ordre-pair} imply \eqref{uniform-convergence} and conclude the proof.
\end{proof}

\section{Proof of Theorem \ref{berry-esseen-1}}
In this section, $(X_t)_{t \geq 0}$ refers again to the process living in $E \cup \{\d\}$ and absorbed at $\d$.

We now prove Theorem \ref{berry-esseen-1}, divided in three steps. \medskip
 
\underline{\textit{Step 1.}} For all $\mu \in \cM_1(E)$, $t \geq 0$ and $f \in \cB_1(E)$, $g \in \L^\infty(\psi_1)$ and $k \in \Z_+$,
\begin{multline*}\E_\mu\left(\left[\int_0^t f(X_s)ds\right]^k g(X_t) \middle| \tau_\d > t\right) = k! \int_{0 \leq s_1 \leq \ldots \leq s_k \leq t} \E_\mu(f(X_{s_1}) f(X_{s_2}) \ldots f(X_{s_k}) g(X_t) | \tau_\d > t) ds_1 \ldots ds_k \\
= k! \int_0^t \E_\mu\left(\left[\int_{0 \leq s_1 \leq \ldots \leq s_{k-1} \leq s} f(X_{s_1}) \ldots f(X_{s_{k-1}}) ds_1 \ldots ds_{k-1}\right]f(X_s)g(X_t)\middle| \tau_\d > t\right)ds \\
= k \int_0^t \E_\mu\left(\left[\int_0^s f(X_u)du\right]^{k-1}f(X_s)g(X_t)\middle| \tau_\d > t\right)ds \\
= k \int_0^t \frac{1}{\P_\mu(\tau_\d > t)}\E_\mu\left(\left[\int_0^s f(X_u)du\right]^{k-1}f(X_s) \E_{X_s}(g(X_{t-s})\1_{\tau_\d > t-s}) \1_{\tau_\d > s}\right)ds.\end{multline*}
For all $s \leq t$, $\mu \in \cM_1(E)$, $g \in \L^\infty(\psi_1)$ and $x \in E$, denote 
$$C_{\mu,g}(s,t,x) := \frac{\mu(\eta)}{e^{\lambda_0 s}} e^{\gamma(t-s)} \left\{ \frac{\E_x(g(X_{t-s})\1_{\tau_\d > t-s})}{\P_\mu(\tau_\d > t)} - \frac{e^{\lambda_0 s}\eta(x)\alpha(g)}{\mu(\eta)}\right\}.$$
Thus, for all $\mu \in \cM_1(E)$, $f \in \cB_1(E)$ such that $\beta(f) = 0$, $g \in \L^\infty(\psi_1)$, $k \in \Z_+$ and $t \geq 0$,
\begin{multline*}
    \E_\mu\left(\left[\int_0^t f(X_s)ds\right]^k g(X_t) \middle| \tau_\d > t\right) - \alpha(g) \E^\Q_{\eta \circ \mu}\left(\left[\int_0^t f(X_s)ds\right]^k \right) \\
    = k\times e^{-\gamma t} \int_0^t e^{\gamma s} \E^\Q_{\eta \circ \mu}\left(\left(\int_0^s f(X_u)du\right)^{k-1} \frac{f(X_s)C_{\mu,g}(s,t,X_s)}{\eta(X_s)}\right)ds.
\end{multline*}
Similarly to the proof of Theorem \ref{convergence-uniforme}, for all $t > 0$, $\omega \in \R$, $\mu \in \cM_1(E)$ and $f$ such that $\beta(f) = 0$,
\begin{multline}
    \E_\mu\left(e^{\frac{i \omega}{\sqrt{t}}\int_0^t f(X_s)ds} \middle| \tau_\d > t\right) - \E_{\eta \circ \mu}^\Q\left[e^{\frac{i \omega}{\sqrt{t}}\int_0^t f(X_s)ds}\right] \\
    = \sum_{k=1}^\infty \left(\frac{i \omega}{\sqrt{t}}\right)^k \frac{1}{k!} \left\{\E_\mu\left(\left[\int_0^t f(X_s)ds\right]^k \middle| \tau_\d > t\right) - \E^\Q_{\eta \circ \mu}\left(\left[\int_0^t f(X_s)ds\right]^k \right)\right\} 
    \\ = \sum_{k=1}^\infty \frac{i^k \omega^k}{t^{k/2}} \frac{1}{(k-1)!}  e^{-\gamma t} \int_0^t e^{\gamma s} \E^\Q_{\eta \circ \mu}\left(\left(\int_0^s f(X_u)du\right)^{k-1} \frac{f(X_s)C_{\mu,\1_E}(s,t,X_s)}{\eta(X_s)}\right)ds  \\
    =  e^{-\gamma t} \int_0^t e^{\gamma s} \frac{i \omega}{\sqrt{t}} \E^\Q_{\eta \circ \mu}\left(e^{\frac{ i \omega}{\sqrt{s}}\int_0^s f(X_u)du} \frac{f(X_s)C_{\mu,\1_E}(s,t,X_s)}{\eta(X_s)}\right)ds. \label{moment}
\end{multline}
\underline{\textit{Step 2.}} By triangular inequality, for all $s \leq t$ and $x \in E$,
\begin{multline}|C_{\mu,g}(s,t,x)| \leq \frac{\mu(\eta)}{e^{\lambda_0 s}} e^{\gamma(t-s)} \left\{\left|\frac{\E_x[g(X_{t-s})\1_{\tau_\d > t-s}]}{\P_\mu(\tau_\d > t)} - \frac{e^{-\lambda_0 (t-s)}\eta(x) \alpha(g)}{\P_\mu(\tau_\d > t)}\right|\right. \\ + \left.\left|\frac{e^{-\lambda_0(t-s)}\eta(x)\alpha(g)}{\P_\mu(\tau_\d > t)} - \frac{e^{\lambda_0 s} \eta(x) \alpha(g)}{\mu(\eta)}\right|\right\}.\label{triangular}\end{multline}
By \eqref{ergodicity},
$$ \frac{\mu(\eta)}{e^{\lambda_0 s}} e^{\gamma(t-s)} \left|\frac{\E_x[g(X_{t-s})\1_{\tau_\d > t-s}]}{\P_\mu(\tau_\d > t)} - \frac{e^{-\lambda_0 (t-s)}\eta(x) \alpha(g)}{\P_\mu(\tau_\d > t)}\right| \leq C \|g\|_{\L^\infty(\psi_1)} \psi_1(x) \mu(\eta) \frac{e^{-\lambda_0 t}}{\P_\mu(\tau_\d > t)}.$$
Again by \eqref{ergodicity},
$$e^{\lambda_0 t}\P_\mu(\tau_\d > t) \geq \mu(\eta) - C \mu(\psi_1)e^{-\gamma t}.$$
Hence, for all $t \geq \frac{1}{\gamma} \log\left(\frac{2C \mu(\psi_1)}{\mu(\eta)}\right)$, 
\begin{align*}
\frac{\mu(\eta)}{e^{\lambda_0 t}\P_\mu(\tau_\d > t)} &\leq \frac{1}{1 - C \frac{\mu(\psi_1)}{\mu(\eta)}e^{-\gamma t}} \\
&\leq 1 + 2C \frac{\mu(\psi_1)}{\mu(\eta)}e^{-\gamma t} \leq 2.\end{align*}
For the second part of the right-hand side of the inequality \eqref{triangular},
\begin{align*}
\frac{\mu(\eta)}{e^{\lambda_0 s}} e^{\gamma(t-s)} \left|\frac{e^{-\lambda_0(t-s)}\eta(x)\alpha(g)}{\P_\mu(\tau_\d > t)} - \frac{e^{\lambda_0 s} \eta(x) \alpha(g)}{\mu(\eta)}\right| &= \eta(x) |\alpha(g)|e^{\gamma(t-s)} \left|\frac{\mu(\eta)}{e^{\lambda_0 t} \P_\mu(\tau_\d > t)} - 1 \right| \\
&\leq C \eta(x) |\alpha(g)| e^{-\gamma s} \frac{C \mu(\psi_1)}{e^{\lambda_0 t} \P_\mu(\tau_\d > t)} \\
&\leq C \eta(x) |\alpha(g)| 2 C \frac{\mu(\psi_1)}{\mu(\eta)}.\end{align*}
Hence, these inequalities, the fact that $|\alpha(g)| \leq \|g \|_{\L^\infty(\psi_1)} \alpha(\psi_1)$ and \eqref{triangular} imply the existence of a constant $C' > 0$ such that, for all $s \leq t$ such that $t \geq \frac{1}{\gamma} \log\left(\frac{2C \mu(\psi_1)}{\mu(\eta)}\right)$ and $x \in E$, 
\begin{equation}\label{majoration}\left|C_{\mu,g}(s,t,x)\right| \leq C'\|g\|_{\L^\infty(\psi_1)} \left[\psi_1(x) + \frac{\mu(\psi_1)}{\mu(\eta)} \eta(x)\right].\end{equation}

\underline{\textit{Last step.}} By using this last inequality \eqref{majoration} in \eqref{moment}, one obtains that, for all $f \in \cB(E)$ such that $\beta(f) = 0$, $\omega \in \R$ and $t \geq \frac{1}{\gamma} \log\left(\frac{2C \mu(\psi_1)}{\mu(\eta)}\right)$, 
\begin{align}\left|\E_\mu\left(e^{\frac{i \omega}{\sqrt{t}}\int_0^t f(X_s)ds} \middle| \tau_\d > t\right) - \E_{\eta \circ \mu}^\Q\left[e^{\frac{i \omega}{\sqrt{t}}\int_0^t f(X_s)ds}\right]\right| &\leq  e^{-\gamma t} \int_0^t e^{\gamma s} \frac{|\omega|}{\sqrt{t}} \E^\Q_{\eta \circ \mu}\left( \frac{|C_{\mu,\1_E}(s,t,X_s)|}{\eta(X_s)}\right)ds \notag \\ &\leq C' \| \1_E\|_{\L^\infty(\psi_1)} e^{- \gamma t} \int_0^t e^{\gamma s} \frac{|\omega|}{\sqrt{t}} \left(\E_{\eta \circ \mu}^\Q(\psi(X_s)) + \frac{\mu(\psi_1)}{\mu(\eta)}\right)ds \notag \\ &\leq C'' \frac{|\omega|}{\sqrt{t}} \frac{\mu(\psi_1)}{\mu(\eta)}, \label{decay}\end{align}
where $C'' > 0$. This, combined with Theorem \ref{moment-ordre-pair}, proves Theorem \ref{berry-esseen-1}. 

\begin{remark} The presence of $1/\sqrt{t}$ in the last inequality suggests the idea that a Berry-Esseen inequality holds true for Markov processes conditioned not to be absorbed satisfying Assumption 1. In reality, this last upper-bound does not allow directly to deduce such a result.

As a matter of fact, an approach would be to consider the inequality (from \cite{fellerintroduction}),
\begin{multline}d_{\Kolm}\left(\P_\mu(\frac{1}{\sqrt{t}} \int_0^t f(X_s)ds \in \cdot | \tau_\d > t), \Q_{\eta \in \mu}(\frac{1}{\sqrt{t}} \int_0^t f(X_s)ds \in \cdot)\right) \\ \leq \int_{-W}^W \frac{\left|\E_\mu\left(e^{\frac{i \omega}{\sqrt{t}}\int_0^t f(X_s)ds} \middle| \tau_\d > t\right) - \E_{\eta \circ \mu}^\Q\left[e^{\frac{i \omega}{\sqrt{t}}\int_0^t f(X_s)ds}\right]\right|}{|\omega|} d\omega + \frac{\tilde{C}}{W}, \label{lezaud}\end{multline}
where $\tilde{C} > 0$, holding true for all $W> 0$. In particular, it is visible that the inequality \eqref{decay} and Theorem \ref{convergence-uniforme} are not enough to deduce a Berry-Esseen theorem. 

The inequality \eqref{lezaud} is in particular used in \cite{lezaud2001chernoff} to state a Berry-Esseen theorem for reversible Markov processes, using spectral arguments applied to perturbated operators. This approach could certainly be adapted to prove a similar result in the quasi-stationary framework. \end{remark}

\section*{Appendix: Proof of Lemma \ref{lemma-q-proc} and Assumption 1 $ \Rightarrow$ Assumption 2.}

This little section is devoted to the proof of Lemma \ref{lemma-q-proc}, needed to justify the existence of a quasi-ergodic distribution and the convergence \eqref{qed}. The implication Assumption 1 $\Rightarrow$ Assumption 2 will also be proved in this short proof. 

\begin{proof}[Proof of Lemma \ref{lemma-q-proc}]
Assume Assumption 1. Let $t \geq 0$ and $\Gamma \in \cF_t$. Then, for all $T \geq t$ and $x \in E$,
$$e^{\lambda_0 T} \P_x(\Gamma , \tau_\d > T) = e^{\lambda_0 T} \E_x(\1_{\Gamma, \tau_\d > t} \P_{X_t}(\tau_\d > T-t)) = e^{\lambda_0 t} \E_x(\1_{\Gamma, \tau_\d > t} e^{\lambda_0(T-t)}\P_{X_t}(\tau_\d > T-t)).$$By Assumption 1, since $\psi_1 \geq 1$, for all $T \geq t$ and $x \in E$,
\begin{equation} \label{convergence} |e^{\lambda_0 T} \P_x(\Gamma, \tau_\d > T) - e^{\lambda_0 t} \E_x(\1_{\Gamma, \tau_\d > t} \eta(X_t))| \leq C e^{\lambda_0 t} \E_x(\psi_1(X_t) \1_{\tau_\d > t}) e^{- \gamma (T-t)}.\end{equation}
Assumption 1 implies that \begin{equation} \label{majoration-lemma-q-proc} e^{\lambda_0 t} P_t \psi_1(x) \leq \alpha(\psi_1) + C \psi_1(x) e^{-\gamma t} < + \infty, \end{equation} so that the previous inequality entails that, for all $x \in E$, 
$$\lim_{T \to \infty} e^{\lambda_0 T} \P_x(\Gamma, \tau_\d > T) = e^{\lambda_0 t} \E_x(\1_{\Gamma, \tau_\d > t} \eta(X_t)).$$
Since the previous inequality holds true for $\Gamma = \1_E$ and $t = 0$, we deduce that, for all $x \in E'$,
$$\lim_{T \to \infty} \P_x(\Gamma | \tau_\d > T) = e^{\lambda_0 t} \frac{\E_x(\eta(X_t)\1_{\Gamma, \tau_\d > t})}{\eta(x)},$$
which proves the first point of Assumption 2 setting, for all $x \in E'$ and $\Gamma \in \cF_t$,
$$\Q_x(\Gamma) := \E_x(e^{\lambda_0 t} \eta(X_t)\1_{\Gamma, \tau_\d > t})/\eta(x),$$
proving therefore the equality \eqref{expr-q-proc}. 
By definition of $\eta$ and $\alpha$, it is easy to check that $\beta$ is an invariant measure for the $Q$-process.
Moreover, by Assumption 1, for all $f \in \L^\infty(\psi)$ and $x \in E'$, 
$$|\E_x^\Q(f(X_t)) - \beta(f)| = \frac{|e^{\lambda_0 t} P_t(f\eta)(x) - \eta(x) \alpha(f\eta)|}{\eta(x)} \leq C \psi(x) e^{-\gamma t} \|f\|_{\L^\infty(\psi)},$$
which confirms therefore the implication Assumption 1 $\Rightarrow$ Assumption 2. It remains therefore to show the exponential convergence of the function $T \mapsto \P_\mu(\Gamma | \tau_\d > T)$ to $\Q_{\eta \circ \mu}(\Gamma)$, for all $\mu \in \cM_1(E)$ such that $\mu(\eta) > 0$ and $\mu(\psi_1) < + \infty$. To do so, fix such a probability measure $\mu$. Integrating the inequality \eqref{convergence} over $\mu(dx)$, for all $T \geq \frac{1}{\gamma} \log(2C \mu(\psi_1)/\mu(\eta))$,
$$\frac{e^{\lambda_0 t}\E_\mu(\1_{\Gamma, \tau_\d > t}\eta(X_t)) - C e^{\lambda_0 t} \mu P_t \psi_1 e^{-\gamma (T -t)}}{\mu(\eta) + C \mu(\psi_1) e^{-\gamma T}} \leq \P_\mu(\Gamma | \tau_\d > T) \leq \frac{e^{\lambda_0 t}\E_\mu(\1_{\Gamma, \tau_\d > t}\eta(X_t)) + C e^{\lambda_0 t} \mu P_t \psi_1 e^{-\gamma (T -t)}}{\mu(\eta) - C \mu(\psi_1) e^{-\gamma T}}.$$ 
Since $1/(1-x) \leq 1+2x$ for all $x \in (0,1/2]$, for all $T \geq \log\left(\frac{2C \mu(\psi_1)}{\mu(\eta)}\right)$,
\begin{multline*}\frac{e^{\lambda_0 t}\E_\mu(\1_{\Gamma, \tau_\d > t}\eta(X_t)) + C e^{\lambda_0 t} \mu P_t \psi_1 e^{-\gamma (T -t)}}{\mu(\eta) - C \mu(\psi_1) e^{-\gamma T}} \\ \leq \left(\frac{e^{\lambda_0 t}\E_\mu(\1_{\Gamma, \tau_\d > t}\eta(X_t))}{\mu(\eta)} + C \frac{e^{\lambda_0 t}\mu P_t \psi_1 e^{-\gamma (T -t)}}{\mu(\eta)}\right)\left(1 + 2   C \frac{ \mu(\psi_1) e^{-\gamma T}}{\mu(\eta)}\right) \\ \leq \frac{e^{\lambda_0 t}\E_\mu(\1_{\Gamma, \tau_\d > t}\eta(X_t))}{\mu(\eta)} +  2C \frac{\mu(\psi_1)}{\mu(\eta)} e^{- \gamma T} + 2 C \frac{e^{\lambda_0 t}\mu P_t \psi_1 e^{-\gamma (T -t)}}{\mu(\eta)}  \\ \leq \frac{e^{\lambda_0 t}\E_\mu(\1_{\Gamma, \tau_\d > t}\eta(X_t))}{\mu(\eta)} + 2 C (\alpha(\psi_1) + C + 1) \frac{\mu(\psi_1) e^{-\gamma (T -t)}}{\mu(\eta)},\end{multline*}
where we used \eqref{majoration-lemma-q-proc}.
In the same vein, there exists a constant $C' > 0$ such that, for all $T \geq \frac{1}{\gamma} \log(2C \mu(\psi_1)/\mu(\eta))$, 
$$\frac{e^{\lambda_0 t}\E_\mu(\1_{\Gamma, \tau_\d > t}\eta(X_t)) - C e^{\lambda_0 t}\mu P_t \psi_1 e^{-\gamma (T -t)}}{\mu(\eta) + C e^{\lambda_0 t}\mu P_t \psi_1 e^{-\gamma(T-t)}} \geq \frac{e^{\lambda_0 t}\E_\mu(\1_{\Gamma, \tau_\d > t}\eta(X_t))}{\mu(\eta)} - C' \frac{\mu(\psi_1) e^{-\gamma (T -t)}}{\mu(\eta)}.$$
In conclusion, there exists a constant $C'> 0$ such that, for all $\mu \in \cM_1(E)$ such that $\mu(\psi_1) < + \infty$ and $\mu(\eta) > 0$, $t \geq 0$, $\Gamma \in \cF_t$ and $T \geq t \lor \frac{1}{\gamma} \log(2C\mu(\psi_1)/\mu(\eta))$,
$$|\P_\mu(\Gamma | \tau_\d > T) - \Q_{\eta \circ \mu}(\Gamma)| \leq C' \frac{\mu(\psi_1) e^{-\gamma (T -t)}}{\mu(\eta)}.$$
To generalize this inequality for all $T \geq t$, it is enough to remark that, if $t \leq T$ and $T \leq \frac{1}{\gamma} \log(2C \mu(\psi_1)/\mu(\eta))$, then for all $\Gamma \in \cF_t$,
$$|\P_\mu(\Gamma | \tau_\d > t) - \Q_{\eta \circ \mu}(\Gamma)| \leq 2 \leq 2 \times 2C \frac{\mu(\psi_1)}{\mu(\eta)} \times e^{-\gamma (T-t)},$$
which concludes the proof.  
\end{proof}



\bibliography{biblio-william}
\bibliographystyle{plain}




\end{document}